\newtheorem{theorem}{Theorem} 
\newtheorem*{lemma}{Lemma} 
\newtheorem{claim}{Claim}
\theoremstyle{remark}
\newtheorem{remark}{Remark}
\newcommand{\yaji}[2]{\begin{pmatrix} #1 \\ #2 \end{pmatrix}}
\newcommand{\mat}[4]{\begin{pmatrix} #1 & #2 \\ #3 & #4 \end{pmatrix}}
\begin{document}



\title{On two-bridge ribbon knots} 

\author{Sayo Horigome}
\address{Caritas Girls' Junior \& Senior High School, 4-6-1 Nakanoshima, Tama Ward, Kawasaki, Kanagawa 214-0012, Japan}

\author{Kazuhiro Ichihara}
\address{Department of Mathematics, College of Humanities and Sciences, Nihon University, 3-25-40 Sakurajosui, Setagaya Ward, Tokyo 156-8550, Japan}
\email{ichihara.kazuhiro@nihon-u.ac.jp}

\dedicatory{Dedicated to Professor Kouki Taniyama on his 60th birthday.}


\date{\today} 

\keywords{two-bridge knot, ribbon knot, symmetric union}

\subjclass[2020]{57K10}

\begin{abstract}
We show that a two-bridge ribbon knot $K(m^2 , m k \pm 1)$ with $m > k >0$ and $(m,k)=1$ admits a symmetric union presentation with partial knot which is a two-bridge knot $K(m,k)$. Similar descriptions for all the other two-bridge ribbon knots are also given. 
\end{abstract}

\maketitle

\section{Introduction and statement of results}

\footnotetext{This is an arXiv version. An updated and compressed version will be published.}
The slice-ribbon conjecture is one of the well-known long standing conjectures in knot theory, which states that every slice knot is ribbon. 
A knot is called \textit{slice} if it is concordant to the unknot, and is called \textit{ribbon} if it bounds a singular disk with only ribbon singularities. 
See Fig.~\ref{2}. 
\begin{figure}[H]
\begin{center}
\includegraphics[width=.3\textwidth]{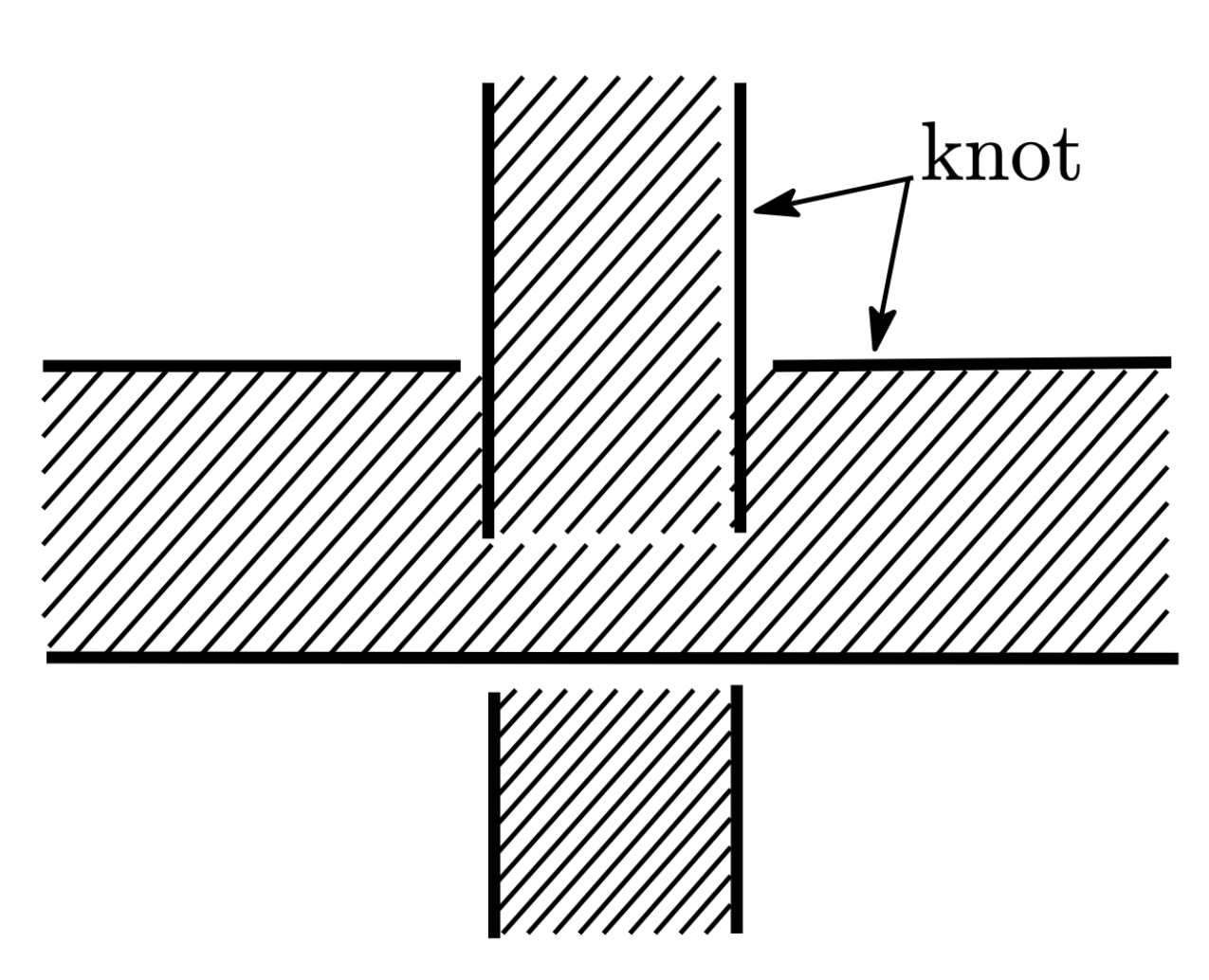}
\end{center}
\caption{ribbon singularity}\label{2}
\end{figure}
The slice-ribbon conjecture is still widely open, but in \cite{Lisca07}, Lisca proved that it holds for two-bridge knots, that is, he showed that every slice two-bridge knot is ribbon. 
A knot $K$ in the 3-space is called a \textit{two-bridge knot} if its bridge index is equal to two. 
Previously, there were 3 families of two-bridge ribbon knots given in \cite{CassonGordon86}, then in fact, Lisca showed that there are no other slice two-bridge knots. 

On the other hand, Lamm raised another conjecture regarding ribbon knots in \cite{Lamm00}. 
The conjecture states that every ribbon knot is represented as a symmetric union. 
A \textit{symmetric union} is a diagrammatic presentation for links, which was originally introduced by Kinoshita and Terasaka \cite{KinoshitaTerasaka}, and extended by Lamm \cite{Lamm00}. 
It is built from a given diagram of a knot, called the \textit{partial knot}. 
See the next section for precise definitions. 
A knot with a symmetric union presentation is visually shown to be ribbon. 
See \cite{Lamm21} for example. 
Thus the conjecture by Lamm asks whether its converse does hold or not. 
%
In fact, in \cite[Theorem 4.1]{Lamm21}, Lamm 
presented symmetric union presentations of the 3 families of two-bridge ribbon knots given in \cite{siebenmann1975exercices}. 

In this paper, we consider the symmetric union presentations and their partial knots for two-bridge ribbon knots. 

In the following, we denote by $K(p,q)$ the two-bridge knot represented by the Schubert form $S(p,q)$, or, equivalently, obtained by the closure of the rational tangle associated with a rational number $p/q$ for a positive, odd integer $p$ and an integer $q$ satisfying that $0<|q|<p$ and $(p,q)=1$. 
See \cite{Kawauchi} for more details of the definition and notation about two-bridge knots. 

In \cite[Theorem 1.2]{Lisca07}, it is proved that a two-bridge knot $K$ is ribbon if and only if $K$ is equivalent to $K(p,q)$ with $p>q>0$, $(p,q)=1$ satisfying $p=m^2$ for some odd positive $m$ and $q$ is one of the following types: 
\begin{itemize}
\item[(1)] $m k \pm 1$ with $m>k>0$ and $(m,k)=1$;
\item[(2)] $d(m \pm 1)$, where $d >1$ divides $2m \mp 1$;
\item[(3)] $d(m \pm 1)$, where $d >1$ is odd and divides $m \pm 1$.
\end{itemize}

Then, for these knots, the following were stated 
as \cite[Theorem 10.2]{lamm2006symmetric} in 
the version 1 of the preprint 
without proofs (and not included in the version 2 of the preprint and the published version \cite{Lamm21}). 

\begin{theorem}\label{MainThm}
A two-bridge knot $K(m^2 , m k \pm 1)$ with $m>k>0$ and $(m,k)=1$ is a ribbon knot which admits a symmetric union presentation with partial knot which is the two-bridge knot $K(m,k)$. 
\end{theorem}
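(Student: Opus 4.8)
The plan is to work entirely with continued fractions and rational tangles, and to reduce the statement to a single arithmetic identity that is then verified by a short matrix computation. First I would fix a continued fraction expansion $m/k = [a_1, a_2, \ldots, a_n]$ compatible with the Schubert normal form, so that $K(m,k)$ is the closure of the rational tangle $T(a_1, \ldots, a_n)$. I would then build a symmetric union diagram $D$ in the sense of Lamm by placing $T(a_1, \ldots, a_n)$ on the left of a vertical axis, its mirror image on the right, and a single clasp (one crossing on the axis) in the middle. By construction $D$ is symmetric across the axis away from the axis crossing, so it is a genuine symmetric union; deleting the axis crossing and taking the induced partial closure recovers $T(a_1, \ldots, a_n)$, whose closure is $K(m,k)$, so the partial knot is $K(m,k)$ as required. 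Since a symmetric union is always ribbon, the ribbon assertion then follows immediately.

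It remains to identify the two-bridge knot represented by $D$. Reading the diagram as a Conway form, the clasp merges the two central twist regions into the adjacent pair $a_n \pm 1, a_n \mp 1$, so that $D$ is the closure of the rational tangle with palindrome-with-clasp continued fraction $[a_1, \ldots, a_{n-1}, a_n \pm 1, a_n \mp 1, a_{n-1}, \ldots, a_1]$. Thus the theorem reduces to the identity that this continued fraction equals $m^2/(mk \pm 1)$.

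To prove the identity I would use the encoding $M(a) = \mat{a}{1}{1}{0}$, for which $M(a_1)\cdots M(a_n) = \mat{p_n}{p_{n-1}}{q_n}{q_{n-1}}$ with $p_n/q_n = [a_1,\ldots,a_n] = m/k$ and successive convergents $p_{n-1}/q_{n-1}$, $p_{n-2}/q_{n-2}$. Writing $A = M(a_1)\cdots M(a_{n-1}) = \mat{p_{n-1}}{p_{n-2}}{q_{n-1}}{q_{n-2}}$ and using that each $M(a)$ is symmetric, the reversed product $M(a_{n-1})\cdots M(a_1)$ equals $A^{T}$, so the full matrix is $P = A\, C\, A^{T}$ with $C = M(a_n\pm1)M(a_n\mp1) = \mat{a_n^2}{a_n\pm1}{a_n\mp1}{1}$. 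A direct expansion gives $P_{11} = (a_n p_{n-1} + p_{n-2})^2 = p_n^2 = m^2$, the cross terms telescoping into a perfect square, and $P_{21} = p_n q_n \pm (p_{n-2}q_{n-1} - p_{n-1}q_{n-2}) = mk \pm (-1)^n$, using the convergent determinant relation $p_{n-2}q_{n-1} - p_{n-1}q_{n-2} = (-1)^{n}$. Hence $P$ represents $m^2/(mk \pm 1)$ once the sign of $\pm 1$ is matched to the parity of $n$.

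The hard part will not be this computation but the bookkeeping that precedes it: choosing one continued-fraction convention that is simultaneously consistent with the Schubert form $K(p,q)$ and with Lamm's normalization of symmetric union diagrams, verifying that the clasp really produces the central modification $a_n \pm 1, a_n \mp 1$ rather than an extra central term, and tracking the two sign choices—the $\pm$ in $mk \pm 1$ and the parity factor $(-1)^n$—so that both cases of the theorem are covered. I would treat the two clasp orientations as the two cases and check each against small examples (the partial trefoil $3/1 = [3]$ yielding $6_1 = K(9,2)$, and the partial figure-eight $5/2 = [2,2]$ yielding $K(25,9)$ and $K(25,11)$) to pin down the conventions before presenting the general argument.
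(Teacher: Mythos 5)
Your proposal is correct, and its skeleton is the same as the paper's: form the symmetric union of the Conway form $D_0=C(a_1,\ldots,a_n)$ of $K(m,k)$ and its reflection, with a single clasp $\varepsilon=\pm1$ on the axis (so the partial knot is $K(m,k)$ and ribbonness is automatic), then reduce everything to a continued-fraction identity proved by $2\times2$ matrices. The two genuine differences are in how the resulting diagram is read and how the identity is verified. The paper reads the symmetric union literally as $C(a_1,\ldots,a_n,\varepsilon,-a_n,\ldots,-a_1)$, keeping the clasp as a central entry; you instead absorb the clasp into its neighbours to get the palindrome $[a_1,\ldots,a_{n-1},a_n\pm1,a_n\mp1,a_{n-1},\ldots,a_1]$. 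These agree as rational numbers, by the identity $[\ldots,a,\varepsilon,b,c,\ldots]=[\ldots,a+\varepsilon,-b-\varepsilon,-c,\ldots]$, but the literal reading of the diagram is the paper's form, so your ``merging'' step is exactly this identity (a flype) and needs separate justification---you rightly flagged it as the delicate point; it does hold, and your matrix calculus is indifferent to the zero entry that can arise when $a_n=\pm1$. As for the identity itself, the paper's main proof is an induction on $n$ (Claim~\ref{clm2}) that is considerably longer; your non-inductive computation---write the product as $ACA^{T}$ using the symmetry of $\mat{a}{1}{1}{0}$, expand, and invoke the convergent determinant relation---is essentially the referee's simplified argument reproduced in Appendix~\ref{ap2}, which applies the same transpose trick to the paper's (non-palindromic) form of the fraction. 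A small bonus of your formulation: since $\det(ACA^{T})=1$, the entries $m^2$ and $mk\pm(-1)^n$ are automatically coprime, so you get for free what the paper proves separately as Claim~\ref{clm3}.
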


\begin{theorem}\label{MainThm2}
A two-bridge knot $K(m^2 , d( m \pm 1))$, where $d >1$ divides $2m \mp 1$ (resp. $d >1$ is odd and divides $m \pm 1$), is a ribbon knot which admits a symmetric union presentation with a partial knot which is the two-bridge knot $K(m,d)$. 
\end{theorem}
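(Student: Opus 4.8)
The plan is to translate everything into continued fractions and products of $2\times 2$ integer matrices, and to reduce the statement to a single matrix identity. Recall that $K(p,q)$ is the numerator closure of the rational tangle whose slope $p/q$ has a continued fraction expansion $p/q=[a_1,\dots,a_n]$, and that this tangle is encoded by the product $M=C_{a_1}\cdots C_{a_n}$ of the elementary matrices $C_a=\mat{a}{1}{1}{0}$; concretely
\[
M=\mat{m}{p_{n-1}}{k}{q_{n-1}},\qquad \det M=mq_{n-1}-kp_{n-1}=(-1)^n,
\]
where $[a_1,\dots,a_n]=m/k$ and $[a_1,\dots,a_{n-1}]=p_{n-1}/q_{n-1}$. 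A symmetric union is read off from a \emph{symmetric} expansion: the left half is the mirror image of the right half, so it corresponds to the negated, reversed word, and deleting the crossings lying on the axis recovers the partial knot on each side. Thus the problem becomes that of exhibiting an explicit symmetric continued fraction for $m^2/\big(d(m\pm1)\big)$ whose off-axis part is a continued fraction of $K(m,d)$.

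First I would record the template furnished by Theorem~\ref{MainThm}, which we may assume. Writing $m/k=[a_1,\dots,a_n]$ and $M$ as above, the mirrored reversed tangle is
\[
\overline{M}:=C_{-a_n}\cdots C_{-a_1}=(-1)^n\,P\,M^{T}P,\qquad P=\mat{1}{0}{0}{-1},
\]
a clean identity to verify from $C_{-a}=-P\,C_a\,P$ together with $C_a^{T}=C_a$. Inserting a single axis crossing $C_\beta$ between the two halves gives the symmetric-union matrix $M\,C_\beta\,\overline{M}$, whose first column computes at once to
\[
M\,C_\beta\,\overline{M}\,\yaji{1}{0}=(-1)^n\yaji{\beta m^2}{\beta mk+(-1)^n}.
\]
Taking $\beta=\pm1$ yields the slope $m^2/(mk\pm1)$, while deleting the central crossing $C_\beta$ leaves $[a_1,\dots,a_n]=K(m,k)$ as partial knot; the normalisation $0<mk\pm1<m^2$ and $(m^2,mk\pm1)=1$ follow from $m>k>0$ and $(m,k)=1$. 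This is exactly the shape I want to reproduce for Theorem~\ref{MainThm2}, now with $k$ replaced by $d$.

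The essential new difficulty is that $d(m\pm1)=dm\pm d$ is \emph{not} of the form $dm\pm1$: since the denominator produced by one axis crossing is congruent to $(-1)^n\equiv\pm1\pmod m$, whereas $d(m\pm1)\equiv\pm d\pmod m$ with $d>1$, a single central crossing cannot suffice. Hence the symmetric union for types (2) and (3) must carry a \emph{longer} rational tangle on its axis, and the resulting diagram is in general not a four-plat. The plan is therefore to write $m/d=[a_1,\dots,a_n]$, to build a symmetric axis tangle $A=A(d)$ adapted to $d$, and to compute the first column of $M\,A\,\overline{M}$ (or of a suitable insertion of the axis twists among the $a_i$) exactly as above, arranging the two entries to be $m^2$ and $d(m\pm1)$. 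The divisibility hypotheses enter precisely here: $d\mid 2m\mp1$ for type (2), and $d$ odd with $d\mid m\pm1$ for type (3), are the congruences that let the axis tangle close up symmetrically while keeping the off-axis part equal to $K(m,d)$.

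The main obstacle, then, is the construction and verification of this axis tangle in each of the four sign/type cases, together with the bookkeeping that the diagram so produced is a genuine symmetric union (honest mirror symmetry away from the axis, and partial knot exactly $K(m,d)$ once the axis crossings are removed). After the axis tangle is pinned down, the remaining work is the routine matrix computation of its first column, followed by matching the result to $d(m\pm1)$ up to the standard equivalence $q\sim q^{\pm1}\pmod{p}$ and checking $0<q<p$ and $(p,q)=1$; these last steps I expect to be mechanical, with all the genuine content concentrated in choosing the correct axis tangle dictated by the divisibility conditions.
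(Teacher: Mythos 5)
Your proposal sets up the right formalism and makes one genuinely good observation: since a single axis crossing forces the denominator to be $\equiv \pm 1 \pmod m$, while $d(m\pm 1)\equiv \pm d \pmod m$ with $d>1$, the construction of Theorem~\ref{MainThm} cannot be reused verbatim. However, the proposal then stops exactly where the actual proof must begin. You write that ``the main obstacle is the construction and verification of this axis tangle in each of the four sign/type cases'' and defer it, expecting the rest to be mechanical --- but that construction \emph{is} the entire content of the theorem. Nothing in your text exhibits the symmetric union, identifies its partial knot, or verifies that the resulting knot is $K(m^2, d(m\pm 1))$; so what you have is a plan, not a proof. For comparison, the paper resolves precisely this obstacle by explicit parametrization: in the case $d\mid 2m-1$, $q=d(m+1)$, one writes $d=2s+3$, $2m-1=d(2t+3)$, hence $m=2st+3s+3t+5$, verifies $\frac{m^2}{d(m+1)}=[t+1,2,s+1,t+1,2,s+1]$, converts this Conway form into a symmetric union by an explicit diagram modification (following Lamm), and reads off the partial knot $C(t+1,2,-s-2)$ with $[t+1,2,-s-2]=\frac{m}{d}$; the other three cases are handled by analogous parametrizations and fractions.

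There is also a structural reason to doubt that your intended framework, computing the first column of $M\,A\,\overline{M}$ for a symmetric axis tangle $A$, can be pushed through as stated. The continued fractions that actually work are \emph{periodic} words such as $[t+1,2,s+1,t+1,2,s+1]$, not anti-palindromic words of the form $[a_1,\ldots,a_n,\beta,-a_n,\ldots,-a_1]$; consequently the symmetric-union structure is not visible at the level of the four-plat matrix product at all, and only emerges after a genuine isotopy of the Conway form in which the axis crossings appear at several separated places of the diagram. In other words, the ``mechanical matrix computation'' you hope to reduce to does not capture these presentations, and the divisibility hypotheses enter not as a congruence fixing an axis tangle but through the integer parametrization of $(m,d)$ that makes the explicit periodic expansions exist. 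To complete your argument you would need, at minimum, the four explicit expansions (or an equivalent construction), the diagram moves exhibiting the mirror symmetry, and the identification of the partial knots --- none of which follows formally from the setup you give.
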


The aim of this paper is to give detailed proofs of these theorems. 


\section*{Acknowledgments}
The first half of this paper is based on the master thesis of the first author. 
She would like to thank her supervisor Professor Shinji Fukuhara for his guidance and encouragement. 
She also thanks Mikami Hirasawa and Haruko Miyazawa for helpful discussions. 
The authors thank anonymous referee for various comments, in particular, for information about \cite{Kanenobu86}. 
They also thank Toshifumi Tanaka, Tetsuya Abe, and Christoph Lamm for useful comments.


\section{Preliminaries}

In this section, we introduce the symmetric union, which will be used to prove our theorems.


Let $D$ be a diagram of a knot $K$ and $D^*$ the diagram obtained by reflecting $D$ at an axis in the plane.\footnote{In some recent papers, the diagram obtained by reflecting $D$ is denoted by $-D$ because such a diagram presents the inverse of $K$ in the knot concordance group. In this paper, we use $D^*$ following the notation used in \cite{Lamm00, Lamm21, Tanaka24}.} 
We denote the tangle illustrated in Fig.~\ref{6} by $T(n)$ for $n \in \mathbb{Z}$ and the tangle obtained as the horizontal band with no crossings by $T(\infty )$. 

\begin{figure}[htb]
\begin{center}
\includegraphics[width=.8\textwidth]{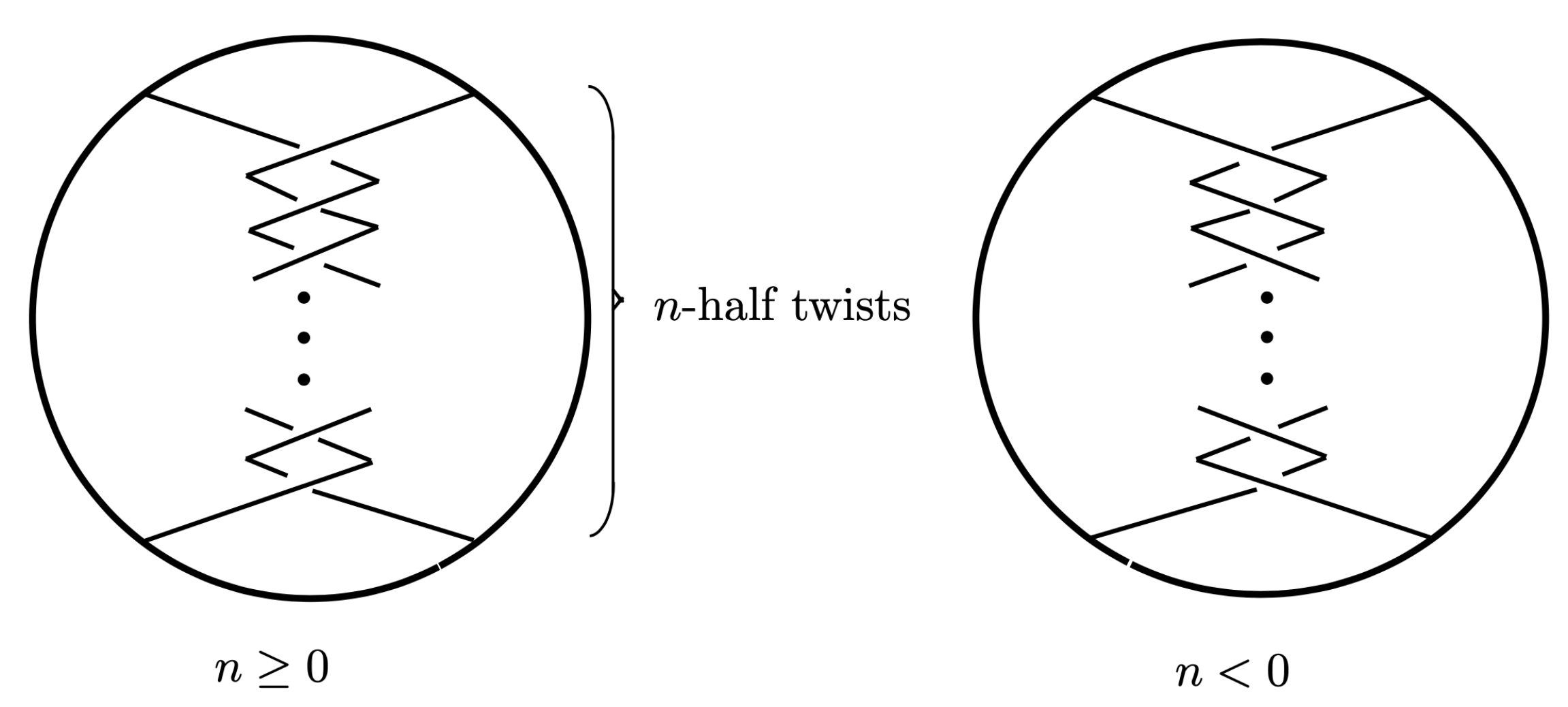}
\end{center}
\caption{Tangle $T(n)$ for $n \in \mathbb{Z}$}\label{6}
\end{figure}


Then, by using the diagrams $D$, $D^*$ and $k+1$ tangles $T(n_{0}),\ldots,T(n_{k})$ $(n_{i} \in \mathbb{Z})$, we construct a new diagram of a new knot in the following way: 
Choose $k+1$ arcs in $D$, take the corresponding arcs in $D^*$, and then, replace these $k+1$ pairs of arcs in $D$ and $D^*$ by the tangles $T(n_{0}),\ldots,T(n_{k})$ as illustrated in Fig.~\ref{7}. 

We call the diagram so obtained the \textit{symmetric union} 
and denote it by $D\cup D^{*}(n_0,n_1,\ldots ,n_k)$, 
where $n_{i}=\infty\ (i=0,\ldots,\mu -1)$ for certain $\mu \ (1 \leq \mu \leq k)$. 

\begin{figure}[htb]
\centering
\includegraphics[width=.95\textwidth]{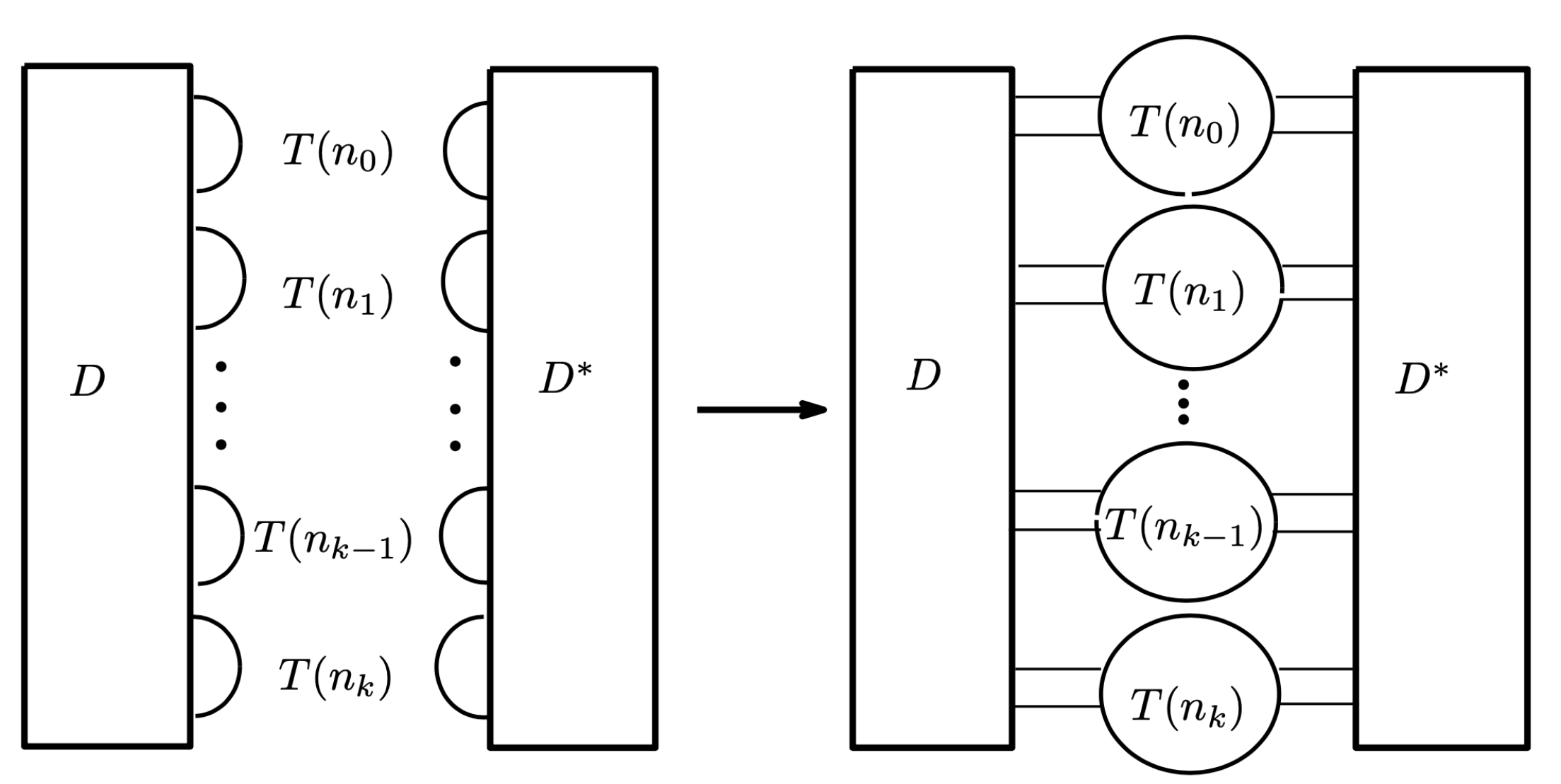}
\caption{symmetric union}\label{7}
\end{figure}

We see that any knot with a diagram which is a symmetric union is a ribbon knot. 
Moreover it was shown in \cite{Lamm00} that any knot having a symmetric union obtained from a non-trivial diagram $D$ is non-trivial. 

We have defined the symmetric union with full generality, 
but we will only consider the case that $\mu=1$ and $n_i =\pm 1$ in this paper. 


\section{Proof of Theorem~\ref{MainThm}}

In this section, we give a proof of Theorem~\ref{MainThm} by constructing symmetric union presentations for given knots. 
Throughout the rest of the paper, a continued fraction 
\[
a_1+ \dfrac{1}{a_2+ \dfrac{1}{\ddots + \dfrac{1}{a_k}}}
\]

\smallskip

\noindent
will be denoted by $[a_1, a_2, \ldots, a_k]$ for $a_{i} \in \mathbb{Z} \setminus \{ 0 \}$. 

The next well-known lemma will be used in the proof of the theorem. 
We include its elementary proof in Appendix for readers' convenience. 

\begin{lemma}\label{clm1}
Let $p,q,a_{i}$ be integers for $i=1, \ldots, n$ with $(p,q)=1$. 
Then 
$$\dfrac{p}{q}
=[a_1 ,\ldots,a_n]$$
if and only if 
$$ \yaji{p}{q} 
= \pm \mat{a_1}{1}{1}{0} \cdots \mat{a_n}{1}{1}{0} \yaji{1}{0} ,$$
where 
we regard 
$\dfrac{1}{\infty }$ as $0$ and $a_{i}+\dfrac{1}{0}$ as $\infty $. 
\end{lemma}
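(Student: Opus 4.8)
The plan is to prove both directions of the equivalence by induction on $n$, the length of the continued fraction, exploiting the fact that the matrix product telescopes exactly like the recursive definition of the continued fraction. The key observation is that the matrix $\mat{a}{1}{1}{0}$ acting on a column vector $\yaji{x}{y}$ produces $\yaji{ax+y}{x}$, which is precisely the numerator/denominator transformation induced by the map $t \mapsto a + 1/t$ when $t = x/y$.

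\medskip

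\noindent\textbf{Base case.} For $n=1$ I would check directly that $\frac{p}{q} = [a_1] = a_1$ (as a fraction, $a_1/1$) is equivalent to $\yaji{p}{q} = \pm\mat{a_1}{1}{1}{0}\yaji{1}{0} = \pm\yaji{a_1}{1}$. Since $(p,q)=1$, the equality $p/q = a_1/1$ together with coprimality forces $\yaji{p}{q} = \pm\yaji{a_1}{1}$, and conversely the vector equation gives $p/q = a_1$. The sign ambiguity $\pm$ is exactly what accommodates the fact that $p/q$ determines $(p,q)$ only up to simultaneous sign change.

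\medskip

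\noindent\textbf{Inductive step.} Assuming the lemma for continued fractions of length $n-1$, I would write
\[
[a_1, a_2, \ldots, a_n] = a_1 + \dfrac{1}{[a_2, \ldots, a_n]}.
\]
Set $\frac{p'}{q'} = [a_2, \ldots, a_n]$ in lowest terms, so by the inductive hypothesis $\yaji{p'}{q'} = \pm \mat{a_2}{1}{1}{0}\cdots\mat{a_n}{1}{1}{0}\yaji{1}{0}$. Then
\[
\frac{p}{q} = a_1 + \frac{q'}{p'} = \frac{a_1 p' + q'}{p'},
\]
and on the matrix side
\[
\mat{a_1}{1}{1}{0}\yaji{p'}{q'} = \yaji{a_1 p' + q'}{p'}.
\]
This matches $\yaji{p}{q}$ up to sign, completing the step. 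Because $\mat{a}{1}{1}{0}$ has determinant $-1$, it preserves the property that the two entries are coprime, so the reduced-fraction bookkeeping stays consistent throughout the induction.

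\medskip

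\noindent\textbf{Main obstacle.} The genuinely delicate part is the handling of the degenerate conventions, namely treating $\frac{1}{\infty}$ as $0$ and $a_i + \frac{1}{0}$ as $\infty$. These arise precisely when some tail $[a_{j}, \ldots, a_n]$ is zero or when a denominator vanishes in the recursion, and I must verify that the matrix identity still reads off correctly in those cases: for instance, $\mat{a}{1}{1}{0}\yaji{1}{0} = \yaji{a}{1}$ should correspond to $a + \frac{1}{0} = \infty$ reinterpreted appropriately, and $\yaji{1}{0}$ itself encodes $\infty = \frac{1}{0}$. I would treat these boundary cases as a short separate verification, checking that the column-vector arithmetic produces the $\yaji{1}{0}$ (i.e.\ $\infty$) or $\yaji{0}{1}$ (i.e.\ $0$) vectors exactly when the stated conventions predict, so that the equivalence remains valid without exception. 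Managing the $\pm$ sign uniformly across both directions, rather than letting it drift, is the other point requiring care.
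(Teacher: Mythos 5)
Your proposal is correct and follows essentially the same route as the paper's own proof in the Appendix: induction on $n$ in both directions, peeling off the leading matrix $\mat{a_1}{1}{1}{0}$ to mirror the recursion $[a_1,\ldots,a_n]=a_1+1/[a_2,\ldots,a_n]$, with the coprimality of $a_1p'+q'$ and $p'$ justifying the identification up to sign. The only cosmetic difference is that the paper writes the ``if'' and ``only if'' inductions out separately and does not isolate the degenerate conventions as a special case, whereas you flag them explicitly.
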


In the following, we will use a well-known diagram of a two-bridge knot $K(p,q)$, so-called \textit{Conway form}. 
It is drawn by using an expansion into a continued fraction $[a_1, a_2, \ldots, a_k]$ of a rational number $p/q$. 
The Conway form associated to $[a_1, a_2, \ldots, a_k]$ will be denoted by $C(a_{1},\ldots ,a_{n})$. 
See \cite{Kawauchi} in detail. 
We remark that our Conway form $C(a_{1},\ldots ,a_{n})$ is the mirror image of the diagram $L(c_1, \dots , c_n)$ associated to a continued fraction $[c_1, \dots , c_n]$ used in \cite{Lisca07}. 

\begin{figure}[htb]
\includegraphics[width=.95\textwidth]{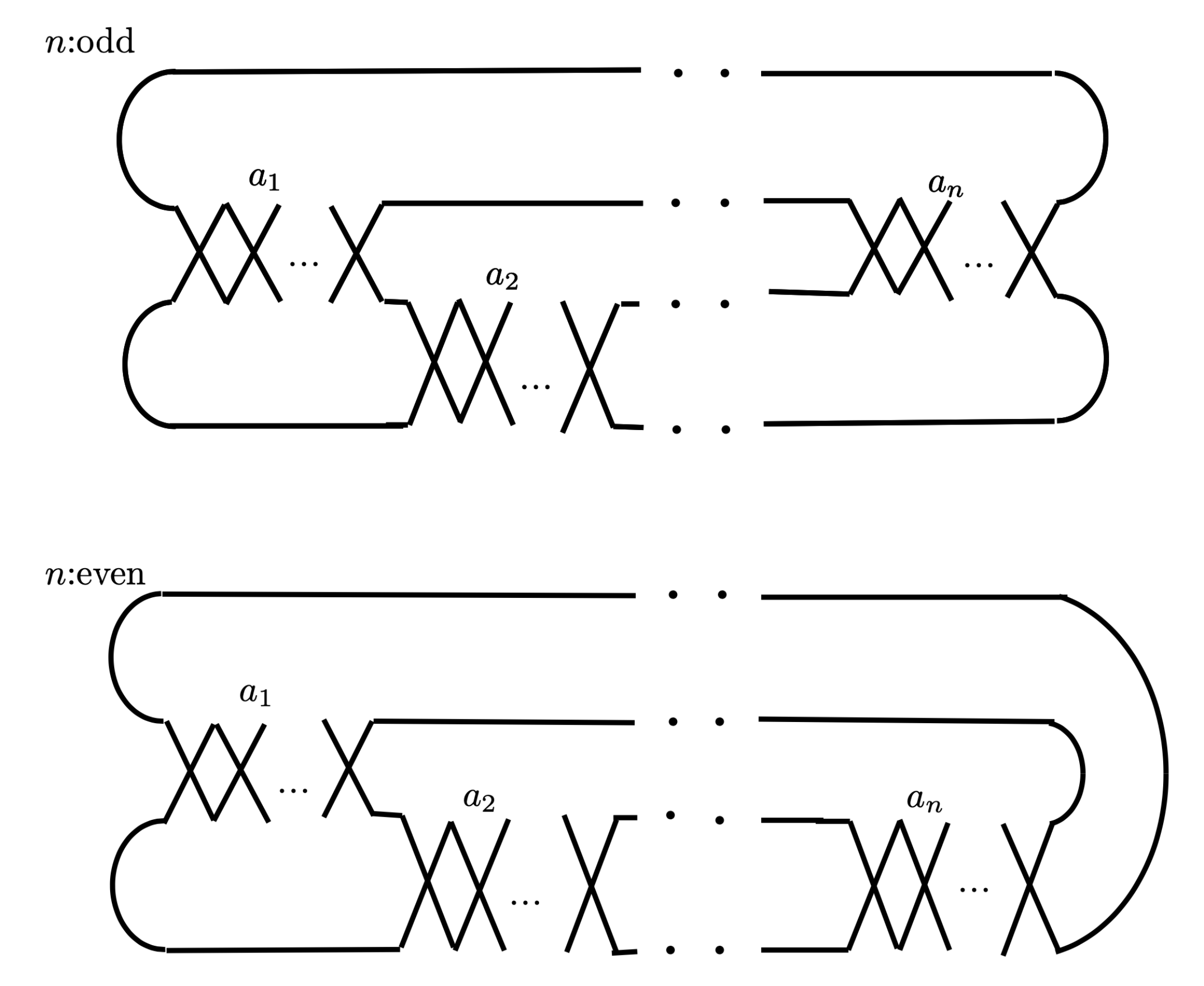}
\caption{The Conway form associated to $[a_1, \ldots, a_n]$. Here the horizontal twist associated to $a_i$ has $|a_i|$ positive (resp. negative) crossings if $a_i >0$ (resp. $a_i < 0$) with $i$ is odd or $a_i < 0$ (resp. $a_i > 0$) with $i$ is even. }\label{7.5}
\end{figure}

\begin{proof}[Proof of Theorem~\ref{MainThm}]
Let $m$ be a positive, odd integer and $k$ an integer satisfying $m > k >0$ and $(m,k)=1$. 

Suppose that $m/k$ is expanded into a continued fraction as follows: 
\begin{equation}
\frac{m}{k} =[a_1,\ldots ,a_n] \ \ \ 
(a_{i} \in \mathbb{Z} \setminus \{ 0 \}).
\label{eq3}
\end{equation}

Let $D_{0}$ be the Conway form $C(a_{1},\ldots ,a_{n})$ and 
$D_{0}^{*}$ the reflection of $D_{0}$. 
With $\varepsilon =+1$ or $-1$, 
we construct 
the symmetric union $D_{0}\cup D_{0}^{*}(\infty ,\varepsilon )$ of $D_0$ and $D_{0}^{*}$ 
as illustrated in Fig.~\ref{10} and Fig.~\ref{11}. 
Let $D$ be the diagram $D_{0}\cup D_{0}^{*}(\infty ,\varepsilon)$ so obtained. 
Then $D$ realizes the Conway form 
$C(a_{1},\ldots ,a_{n},\varepsilon ,-a_{n},\ldots ,-a_{1})$. 
Thus the knot $K$ represented by the diagram $D$ is a two-bridge knot 
which is a ribbon knot. 

\begin{figure}[H]
\begin{center}
\includegraphics[width=\textwidth]{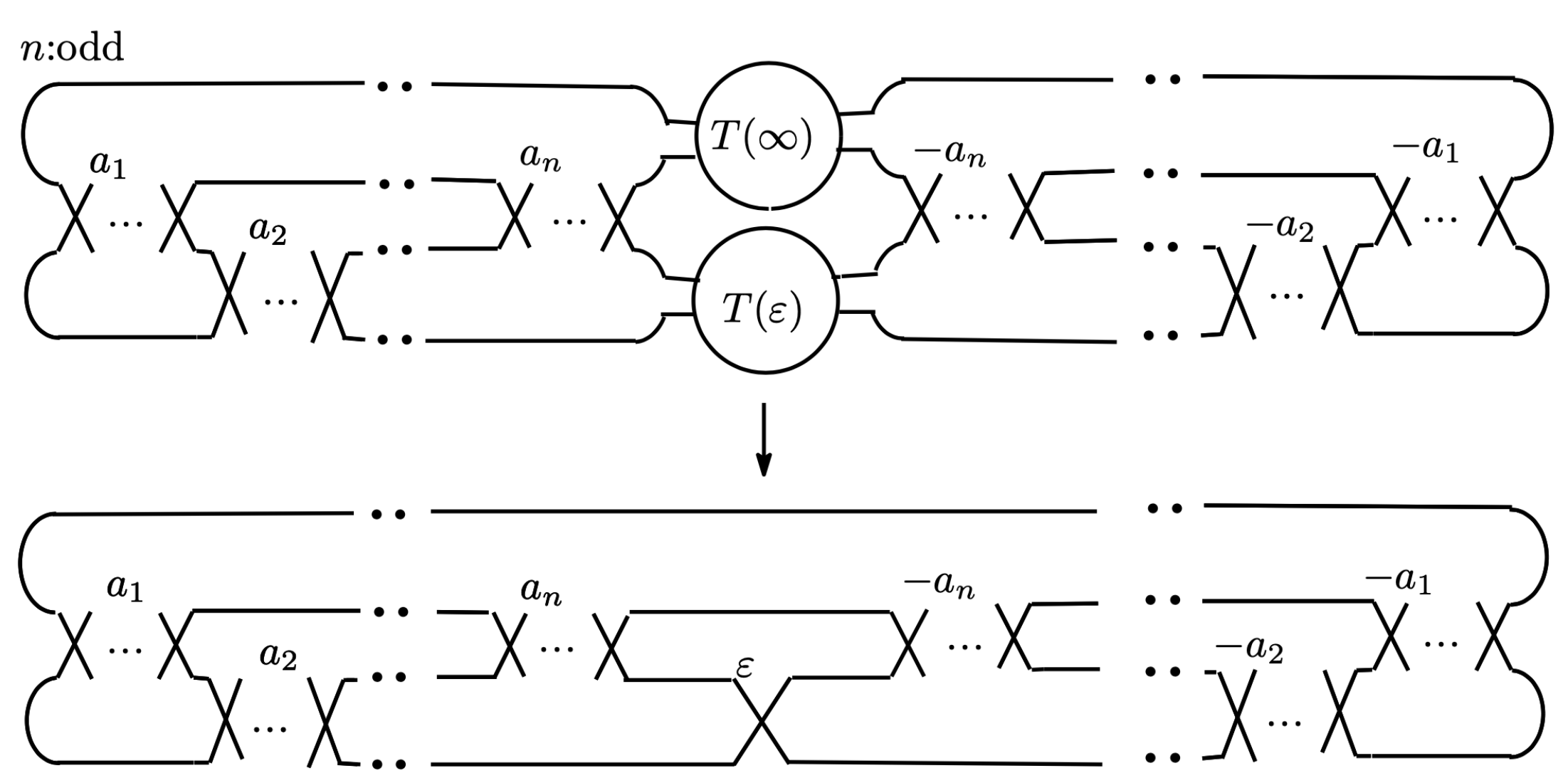}
\end{center}
\caption{$D_0\cup D_{0}^{*}(\infty , \varepsilon )$ for $n$:odd} \label{10}
\end{figure}
\begin{figure}[H]
\begin{center}
\includegraphics[width=\textwidth]{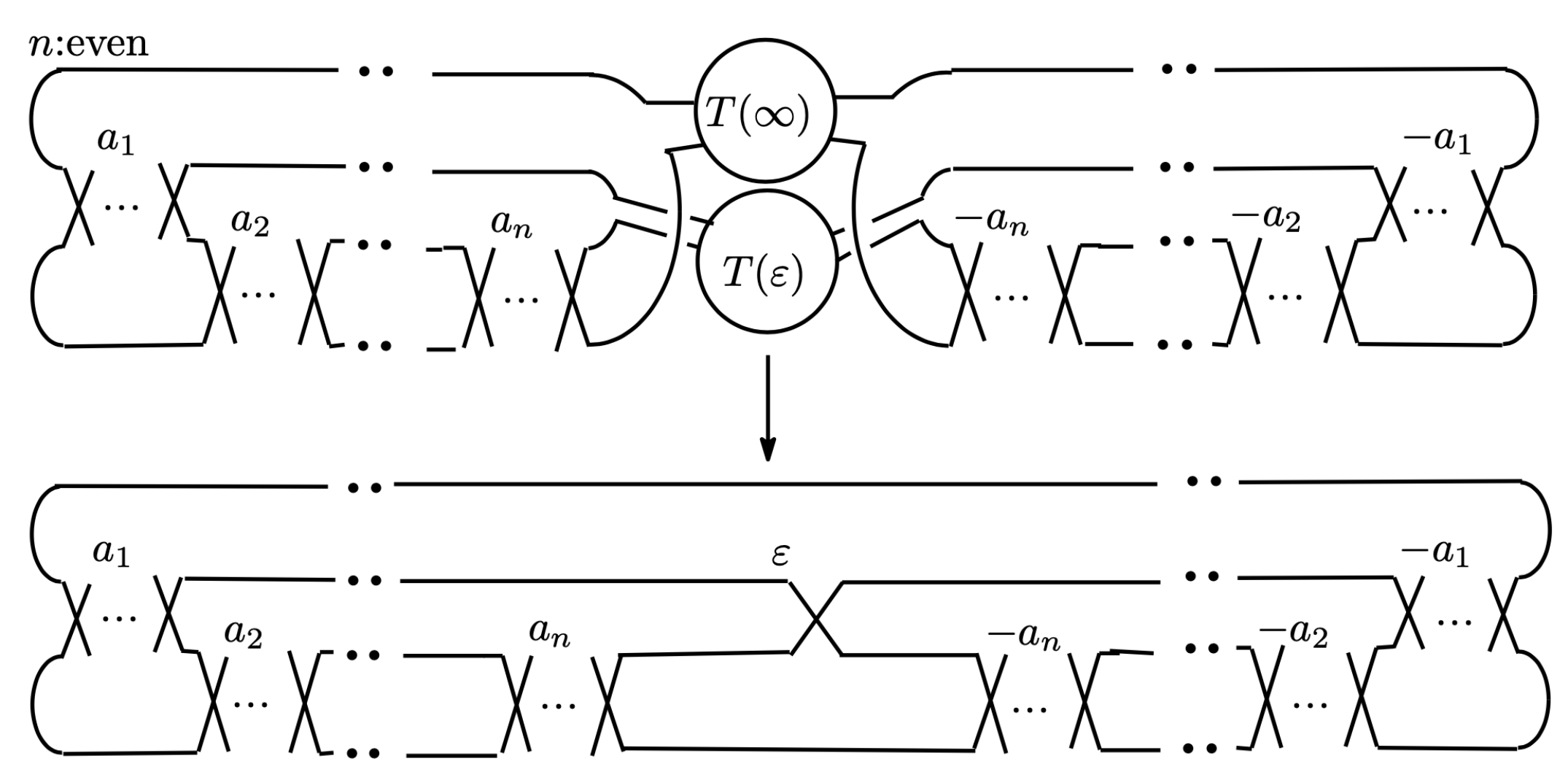}
\end{center}
\caption{$D_0\cup D_{0}^{*}(\infty , \varepsilon )$ for $n$:even} \label{11}
\end{figure}

To prove the theorem, it is sufficient to show that $K$ is equivalent to the knot $K(m^2,m k \pm 1)$, 
in other words, 
\[
\frac{m^2}{m k \pm 1} = [a_{1},\ldots ,a_{n},\varepsilon ,-a_{n},\ldots ,-a_{1}]
\]
holds. 

Suppose that a rational number $p/q$ is obtained by the continued fraction \\
$[a_{1},\ldots ,a_{n},\varepsilon,-a_{n},\ldots ,-a_{1}]$: 
That is, 
\begin{equation}
\frac{p}{q} = [a_{1},\ldots ,a_{n},\varepsilon ,-a_{n},\ldots ,-a_{1}] . \label{eq4}
\end{equation}
Since the Conway form $C(a_{1},\ldots ,a_{n},\varepsilon ,-a_{n},\ldots ,-a_{1})$ represents a two-bridge knot $K$, $p$ must be a positive, odd integer and $q$ an integer with $(p,q)=1$, and so, $K$ has the Schubert form $S(p,q)$. 
Using Eq.~\eqref{eq3} and \eqref{eq4}, we will describe $p$,$ q$ in terms of $m$, $k$. 

By Lemma~\ref{clm1}, Eq.~\eqref{eq3} implies the following. 
\begin{equation}
\yaji{m}{k} 
=\pm \mat{a_1}{1}{1}{0} \cdots \mat{a_n}{1}{1}{0} \yaji{1}{0}. \label{eq5}
\end{equation}
Also Eq.~\eqref{eq4} implies the following. 
\begin{align}
\yaji{p}{q} 
=&\pm \mat{a_1}{1}{1}{0} \cdots \mat{a_n}{1}{1}{0}\mat{\varepsilon}{1}{0}{0} \mat{-a_n}{1}{1}{0} \cdots \mat{-a_1}{1}{1}{0} \yaji{1}{0} . \label{eq6}
\end{align}


We then show: 

\begin{claim}\label{clm2}
The equations \eqref{eq5} and \eqref{eq6} implies 
\begin{equation}
\yaji{p}{q}=\pm \yaji{\varepsilon (-1)^{n} \, m^2}{\varepsilon (-1)^{n} \, m k + 1} . \label{eq7}
\end{equation}
\end{claim}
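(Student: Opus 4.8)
The plan is to reduce Claim~\ref{clm2} to a single $2\times 2$ matrix identity and to exploit the palindromic shape of the word $[a_1,\ldots,a_n,\varepsilon,-a_n,\ldots,-a_1]$. Write $A_i=\mat{a_i}{1}{1}{0}$ and set $P=A_1\cdots A_n=\mat{\alpha}{\beta}{\gamma}{\delta}$. By Eq.~\eqref{eq5} the first column of $P$ is $\pm\yaji{m}{k}$, so $(\alpha,\gamma)=\pm(m,k)$; in particular $\alpha^{2}=m^{2}$ and $\alpha\gamma=mk$ regardless of the sign. Two further facts about $P$ drive everything: each $A_i$ is symmetric, whence $P^{T}=A_n\cdots A_1$, and $\det A_i=-1$, whence $\det P=\alpha\delta-\beta\gamma=(-1)^{n}$.

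The heart of the argument is to rewrite the reversed, sign-flipped tail $B_n\cdots B_1$, where $B_i=\mat{-a_i}{1}{1}{0}$, in terms of $P$. Putting $E=\mat{-1}{0}{0}{1}$, one checks directly that $B_i=-E\,A_i\,E$, and since $E^{2}$ is the identity the inner factors of $E$ telescope, giving
\[
B_n\cdots B_1=(-1)^{n}\,E\,(A_n\cdots A_1)\,E=(-1)^{n}\,E\,P^{T}E .
\]
I expect this step, namely recognizing that reversing the word and negating the entries is the same as transposing $P$ and conjugating by $E$, to be the main obstacle; once it is in hand, the palindrome collapses into a short product involving only $P$, $P^{T}$, and the central factor.

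It then remains to substitute this into Eq.~\eqref{eq6}, together with the factor $M_\varepsilon=\mat{\varepsilon}{1}{1}{0}$ supplied by the single central entry $\varepsilon$ in Lemma~\ref{clm1}, obtaining
\[
\yaji{p}{q}=\pm(-1)^{n}\,P\,M_\varepsilon\,E\,P^{T}E\,\yaji{1}{0}.
\]
I would evaluate this from the right, using $E\yaji{1}{0}=-\yaji{1}{0}$ and the known columns of $P$ and $P^{T}$; the only nontrivial cancellation occurs in the lower entry, where the combination $\alpha\delta-\beta\gamma$ appears and is replaced by $\det P=(-1)^{n}$. Carrying this out produces $\pm(-1)^{n}\yaji{\varepsilon\alpha^{2}}{\varepsilon\alpha\gamma+(-1)^{n}}$, and feeding in $\alpha^{2}=m^{2}$ and $\alpha\gamma=mk$ (so that the sign ambiguity in $(\alpha,\gamma)$ disappears) while the global $(-1)^{n}$ converts the determinant term into $+1$, yields $\pm\yaji{\varepsilon(-1)^{n}m^{2}}{\varepsilon(-1)^{n}mk+1}$. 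This is exactly Eq.~\eqref{eq7}.
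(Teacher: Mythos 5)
Your proof is correct, but it is not the route taken in the body of the paper: there, Claim~\ref{clm2} is proved by induction on $n$, with base cases $n=1,2$ and an induction step that peels off $a_1$ and $a_2$ to produce a three-term recursion, leading to a fairly long entrywise computation. Your argument instead collapses the whole palindromic word at once: with $E=\mat{-1}{0}{0}{1}$ you note $\mat{-a_i}{1}{1}{0}=-E\mat{a_i}{1}{1}{0}E$, so by telescoping (using $E^{2}=I$) and the symmetry of each factor the reversed, sign-flipped tail equals $(-1)^{n}EP^{T}E$, and the claim then follows from a single matrix product together with $\det P=(-1)^{n}$; I checked the final evaluation and the sign bookkeeping (all absorbed into the overall $\pm$) is sound. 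This is essentially the referee's simpler proof reproduced in Appendix~\ref{ap2}: there the identity $\mat{-a_1}{1}{1}{0}\cdots\mat{-a_n}{1}{1}{0}=\pm(-1)^{n}\mat{m}{-l}{-k}{n}$ is merely asserted to be ``proved readily'', and your conjugation-by-$E$ telescoping is precisely the missing mechanism behind it. Compared with the paper's inductive proof, your approach (like the referee's) buys brevity and a structural explanation --- palindromicity plus unimodularity of the matrices is what forces the answer --- while the induction is more pedestrian but entirely self-contained. One further point in your favor: the central matrix in Eq.~\eqref{eq6} is misprinted in the paper as $\mat{\varepsilon}{1}{0}{0}$; your $M_\varepsilon=\mat{\varepsilon}{1}{1}{0}$ is the matrix actually used in all of the paper's own computations, so your reading is the intended one.
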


\begin{proof}
We prove the claim by induction on $n$. 

When $n=1$, Eq.~\eqref{eq5} is equal to 
\begin{equation}
\yaji{m}{k}=\pm \mat{a_1}{1}{1}{0}\yaji{1}{0}=\pm \yaji{a_1}{1} , \nonumber
\end{equation}
and so, we have $m=\pm a_1, \ k=\pm 1$. 
On the other hand, Eq.~\eqref{eq6} is equal to 
\begin{align*}
\yaji{p}{q}
&=\pm \mat{a_1}{1}{1}{0}\mat{\varepsilon }{1}{1}{0}\mat{-a_1}{1}{1}{0}\yaji{1}{0} \\
&=\pm \mat{a_1}{1}{1}{0}\yaji{-\varepsilon a_{1}+1}{-a_1} \\
&=\pm \yaji{-\varepsilon a_{1}^{2}}{-\varepsilon a_{1}+1} \\
&=\pm \yaji{\varepsilon (-1) m^2}{\varepsilon (-1) mk+1}. 
\end{align*}
Thus Eq.~\eqref{eq7} is satisfied. 

When $n=2$, Eq.~\eqref{eq5} is equal to 
\begin{equation}
\yaji{m}{k}=\pm \mat{a_1}{1}{1}{0}\mat{a_2}{1}{1}{0}\yaji{1}{0}=\pm \yaji{a_{1}a_{2}+1}{a_2} \nonumber
\end{equation}
and so, we have $m=\pm a_{1}a_{2}+1, \ k=\pm a_{2}$. 
On the other hand, Eq.~\eqref{eq6} is equal to 
\begin{align*}
\yaji{p}{q}
=&\pm \mat{a_1}{1}{1}{0}\mat{a_2}{1}{1}{0}\mat{\varepsilon }{1}{1}{0} \mat{-a_2}{1}{1}{0}\mat{-a_1}{1}{1}{0}\yaji{1}{0} \\
=&\pm \yaji{\varepsilon a_{1}^{2} a_{2}^{2}+2\varepsilon a_{1}a_{2}+\varepsilon }{\varepsilon a_{1}a_{2}^{2}+\varepsilon a_{2}+1} \\
=&\pm \yaji{\varepsilon (a_{1}a_{2}+1)^2}{\varepsilon a_{2}(a_{1}a_{2}+1)+1} =\pm \yaji{\varepsilon (-1)^{2} m^2}{\varepsilon (-1)^{2} mk+1}
\end{align*}
Thus Eq.~\eqref{eq7} is satisfied. 

Suppose that Eq.~\eqref{eq7} holds for $n \leq \ell - 1$, 
and show that it holds for $n=\ell$ for $\ell \geq 3$. 
When $n=\ell$, Eq.~\eqref{eq5} is equal to 
\begin{equation}
\yaji{m}{k} 
=\pm \mat{a_1}{1}{1}{0} \cdots \mat{a_\ell}{1}{1}{0} \yaji{1}{0} . \label{eq7.5}
\end{equation}
Then Eq.~\eqref{eq6} is regarded as a recursive formula among three terms as follows. 
{\allowdisplaybreaks
\begin{align}
\yaji{p}{q} 
=&\pm \mat{a_1}{1}{1}{0} \cdots \mat{a_\ell}{1}{1}{0}\mat{\varepsilon }{1}{0}{0} \mat{-a_\ell}{1}{1}{0} \cdots \mat{-a_1}{1}{1}{0} \yaji{1}{0} \nonumber \\
=&\pm \mat{a_1}{1}{1}{0} \cdots \mat{a_\ell}{1}{1}{0}\mat{\varepsilon }{1}{0}{0} \mat{-a_\ell}{1}{1}{0} \cdots \mat{-a_2}{1}{1}{0} \yaji{-a_{1}}{1} \nonumber \\
=&\pm \mat{a_1}{1}{1}{0} \cdots \mat{a_\ell}{1}{1}{0}\mat{\varepsilon }{1}{0}{0} \mat{-a_\ell}{1}{1}{0} \cdots \mat{-a_2}{1}{1}{0} \left( -a_{1}\yaji{1}{0}+\yaji{0}{1} \right) \nonumber \\
=&\mp a_{1}\mat{a_1}{1}{1}{0} \cdots \mat{a_\ell}{1}{1}{0}\mat{\varepsilon }{1}{0}{0} \mat{-a_\ell}{1}{1}{0} \cdots \mat{-a_2}{1}{1}{0} \yaji{1}{0} \nonumber \\
&\mbox{}\pm \mat{a_1}{1}{1}{0} \cdots \mat{a_\ell}{1}{1}{0}\mat{\varepsilon }{1}{0}{0} \mat{-a_\ell}{1}{1}{0} \cdots \mat{-a_2}{1}{1}{0} \yaji{0}{1} \nonumber \\
=&\mp a_{1}\mat{a_1}{1}{1}{0} \cdots \mat{a_\ell}{1}{1}{0}\mat{\varepsilon }{1}{0}{0} \mat{-a_\ell}{1}{1}{0} \cdots \mat{-a_2}{1}{1}{0} \yaji{1}{0} \nonumber \\
&\mbox{}\pm \mat{a_1}{1}{1}{0} \cdots \mat{a_\ell}{1}{1}{0}\mat{\varepsilon }{1}{0}{0} \mat{-a_\ell}{1}{1}{0} \cdots \mat{-a_3}{1}{1}{0} \yaji{1}{0} \label{eq8}
\end{align}}

Now, by Eq.~\eqref{eq7.5}, we have
\begin{align}
\mat{a_2}{1}{1}{0} \cdots \mat{a_\ell}{1}{1}{0} \yaji{1}{0}
&=\pm {\mat{a_1}{1}{1}{0}}^{-1} \yaji{m}{k} \nonumber \\
&=\pm \mat{0}{1}{1}{-a_1}\yaji{m}{k} \nonumber \\
&=\pm \yaji{k}{m-a_{1} k} . \nonumber 
\end{align}

This equation together with the assumption of the induction implies 
\begin{align}
\mat{a_2}{1}{1}{0}&\cdots \mat{a_\ell}{1}{1}{0}\mat{\varepsilon }{1}{1}{0}
\mat{-a_\ell}{1}{1}{0} \cdots \mat{-a_2}{1}{1}{0}\yaji{1}{0} \nonumber \\
&=\pm \yaji{\varepsilon (-1)^{\ell-1} k^2}{\varepsilon (-1)^{\ell-1} k(m-a_{1}k)+1} \label{eq12}
\end{align}

In the same way, by Eq.~\eqref{eq7.5}, we have 
\begin{align}
\mat{a_3}{1}{1}{0} \cdots \mat{a_\ell}{1}{1}{0} \yaji{1}{0}
&=\pm {\mat{a_2}{1}{1}{0}}^{-1} \yaji{k}{m-a_{1}k} \nonumber \\
&=\pm \mat{0}{1}{1}{-a_2}\yaji{k}{m-a_{1}k} \nonumber \\
&=\pm \yaji{m-a_{1}k}{k-a_{2}m+a_{1}a_{2}k} . \nonumber 
\end{align}
This equation together with the assumption of the induction implies 
\begin{align}
\mat{a_3}{1}{1}{0}&\cdots \mat{a_\ell}{1}{1}{0}\mat{\varepsilon }{1}{1}{0}\mat{-a_\ell}{1}{1}{0} \cdots \mat{-a_3}{1}{1}{0}\yaji{1}{0} \nonumber \\
&=\pm \yaji{\varepsilon (-1)^{\ell-2}(m-a_{1}k)^2}{\varepsilon (-1)^{\ell-2}(m-a_{1}k)(k-a_{2}m+a_{1}a_{2}k)+1} \label{eq14} 
\end{align}

Therefore, by the equations \eqref{eq12} and \eqref{eq14}, 
Eq.~\eqref{eq8} is equal to: 
\begin{align*}
\yaji{p}{q}=&\mp a_{1}\mat{a_1}{1}{1}{0} \mat{a_{2}}{1}{1}{0} \cdots \mat{a_\ell}{1}{1}{0}\mat{\varepsilon }{1}{0}{0} \mat{-a_\ell}{1}{1}{0} \cdots \mat{-a_2}{1}{1}{0} \yaji{1}{0} \\
&\mbox{}\pm \mat{a_1}{1}{1}{0} \mat{a_2}{1}{1}{0} \mat{a_3}{1}{1}{0}\cdots \mat{a_\ell}{1}{1}{0}\mat{\varepsilon }{1}{0}{0} \mat{-a_\ell}{1}{1}{0} \cdots \mat{-a_3}{1}{1}{0} \yaji{1}{0} \\
=&\mp a_{1} \mat{a_1}{1}{1}{0}\yaji{\varepsilon (-1)^{\ell-1} k^2}{\varepsilon (-1)^{\ell-1} k (m-a_{1}k)+1} \\
&\mbox{} \pm \mat{a_1}{1}{1}{0}\mat{a_2}{1}{1}{0}\yaji{\varepsilon (-1)^{\ell-2}(m-a_{1}k)^2}{\varepsilon (-1)^{\ell-2}(m-a_{1}k)(k-a_{2}m+a_{1}a_{2}k)+1} \\
=&\mp a_{1} \yaji{\varepsilon (-1)^{\ell-1}a_{1}k^{2}+\varepsilon (-1)^{\ell-1}k(m-a_{1}k)+1}{\varepsilon (-1)^{\ell-1}k^2} \\
&\mbox{}\pm \mat{a_{1}a_{2}+1}{a_1}{a_2}{1} \yaji{\varepsilon (-1)^{\ell-2}(m-a_{1}k)^2}{\varepsilon (-1)^{\ell-2}(m-a_{1}k)(k-a_{2}m+a_{1}a_{2}k)+1} \\
=&\mp \yaji{\varepsilon (-1)^{\ell-1}a_{1}mk+a_{1}}{\varepsilon (-1)^{\ell-1}a_{1}k^{2}} \pm \yaji{\varepsilon (-1)^{\ell-2}(m-a_{1}k)m+a_{1}}{\varepsilon (-1)^{\ell-2}(m-a_{1}k)k+1} \\
=&\pm \yaji{\varepsilon (-1)^{\ell} a_{1} m k -a_1}{\varepsilon (-1)^{\ell} a_{1} k^{2}} \pm \yaji{\varepsilon (-1)^{\ell}( m - a_{1} k) m + a_{1} }{\varepsilon (-1)^{\ell}(m-a_{1}k)k+1} \\
=&\pm \yaji{\varepsilon (-1)^{\ell} m^2}{\varepsilon (-1)^{\ell} mk+1}.
\end{align*} 

This implies that Eq.\eqref{eq7} is satisfied for $n=\ell$. 
\end{proof}

Finally, by the next claim, together with $\varepsilon = \pm 1$ and $p>0$, $p,q$ is described as $p=m^2, \ q=mk \pm 1$. 

\begin{claim}\label{clm3}
Let $m, k$ be an arbitrary pair of integers. 
Then $(m^2, mk \pm 1)=1$. 
\end{claim}

\begin{proof}
Let $\alpha \in \mathbb{Z}_+$ be the greatest common divisor of $m^2$ and $mk \pm 1$. 
Then $m^{2}$ and $mk\pm 1$ is represented as $m^2=\alpha X, \ \ mk \pm 1=\alpha Y$ with some integers $X$ and $Y$. 
Thus $mk=\alpha Y \mp 1$, and so, we obtain 
\begin{equation}
m^{2} k^{2} = \alpha^{2} Y^{2}\mp 2 \alpha Y + 1 . \label{eq10}
\end{equation}
The left hand side of Eq.~\eqref{eq10} is equal to 
$m^{2}k^{2}=\alpha X k^{2}\equiv 0 \ \pmod{\alpha}$ and 
the right hand side of Eq.~\eqref{eq10} is equal to 
$\alpha^2 Y^2\mp 2 \alpha Y + 1\equiv 1 \pmod{\alpha}$. 
Therefore, we have $0\equiv 1 \pmod{\alpha}$, which can be satisfied for the case of $\alpha =1$ only. 
We then conclude that $(m^2 , mk \pm 1)=1$. 
\end{proof}

Since $m>k>0$, we have $0<|mk \pm 1|<m^2$, so we conclude that the two-bridge knot $K$, which is also a ribbon knot has the Schubert form $S(m^2,mk \pm 1)$. 
\end{proof}

\begin{remark}
    In the reviewing process, simpler proofs of Claims~\ref{clm2} and \ref{clm3} were presented by the referee. We include the proofs in Appexdix~\ref{ap2}.
\end{remark}

\section{Proof of Theorem~\ref{MainThm2}}

In this section, we give a proof of Theorem~\ref{MainThm2}. 
The main idea comes from \cite{Lisca07} and the modifications of the diagrams essentially follow from \cite{Lamm21}. 

\begin{proof}[Proof of Theorem~\ref{MainThm2}]

We divide the proof into four claims. 

\begin{claim}\label{clm31}
A two-bridge knot $K(m^2 , d( m + 1))$, where $d >1$ divides $2m - 1$, is a ribbon knot which admits a symmetric union presentation with partial knot which is the two-bridge knot $K(m,d)$. 
\end{claim}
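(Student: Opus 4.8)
The plan is to prove Claim~\ref{clm31} by imitating the continued-fraction strategy used for Theorem~\ref{MainThm}, but now the symmetric union will be built from a partial knot $K(m,d)$ and an auxiliary ``insertion'' tangle chosen so that the resulting Conway form encodes $K(m^2, d(m+1))$. Concretely, I would first fix an expansion $m/d = [a_1, \ldots, a_n]$ into a continued fraction with $a_i \in \mathbb{Z}\setminus\{0\}$, let $D_0 = C(a_1,\ldots,a_n)$ be the associated Conway form of the partial knot $K(m,d)$, and let $D_0^*$ be its reflection. By Lemma~\ref{clm1} this expansion is equivalent to the matrix identity
\[
\yaji{m}{d} = \pm \mat{a_1}{1}{1}{0} \cdots \mat{a_n}{1}{1}{0} \yaji{1}{0}.
\]
The key difference from the first theorem is the insertion data: rather than a single $T(\varepsilon)$ tangle, I expect the condition ``$d$ divides $2m-1$'' to force a specific nontrivial insertion encoded by one or two extra integer entries (depending on the arithmetic of $m$, $d$, and $2m-1$), so the central matrix will no longer simply be $\mat{\varepsilon}{1}{0}{0}$.

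The main work is to identify the correct symmetric union $D_0 \cup D_0^*(\ldots)$ and then verify that its Conway form realizes $K(m^2, d(m+1))$. I would attack this exactly as in Claim~\ref{clm2}: write down the product
\[
\yaji{p}{q} = \pm \mat{a_1}{1}{1}{0} \cdots \mat{a_n}{1}{1}{0} M \mat{-a_n}{1}{1}{0} \cdots \mat{-a_1}{1}{1}{0} \yaji{1}{0},
\]
where $M$ is the central insertion matrix, and prove by induction on $n$ that $(p,q) = (m^2, d(m+1))$ up to sign. The base cases $n=1,2$ reduce to a direct computation, and the inductive step peels off the outer factors $\mat{\pm a_1}{1}{1}{0}$ and applies the inductive hypothesis to the two sub-words, just as in the three-term recursion leading to Eq.~\eqref{eq8}. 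The arithmetic condition $d \mid 2m-1$ should enter precisely when checking that the second coordinate collapses to $d(m+1)$ rather than some other congruent value; I would track this through the recursion carefully.

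I would then dispose of the coprimality and range requirements separately. For coprimality I expect an elementary number-theoretic argument in the spirit of Claim~\ref{clm3}: if $\alpha = (m^2, d(m+1))$, then the hypothesis $d \mid 2m-1$ (so that $d$ is coprime to $m$ and to $m+1$ in the relevant way) forces $\alpha = 1$. Finally, since $m > 1$ and $d \mid 2m-1$ gives $d \le 2m-1$, one checks $0 < d(m+1) < m^2$ to confirm the Schubert normalization, completing the identification $K \simeq K(m^2, d(m+1))$.

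The hard part will be pinning down the central insertion matrix $M$ and the exact placement of the $\pm 1$ twist tangle that makes the bookkeeping work out; unlike the clean $\varepsilon$-insertion of Theorem~\ref{MainThm}, here the divisibility hypothesis must be converted into an explicit tangle modification (this is where ``the modifications of the diagrams essentially follow from \cite{Lamm21}'' and ``the main idea comes from \cite{Lisca07}''). Once $M$ is correctly guessed, the inductive verification should be routine but calculation-heavy, mirroring Claim~\ref{clm2} line for line.
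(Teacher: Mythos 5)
Your plan is not the paper's route, and it contains a gap that is fatal rather than merely ``hard'': the central insertion matrix $M$ you defer finding does not exist. Write $\mat{a_1}{1}{1}{0}\cdots\mat{a_n}{1}{1}{0}=\pm\mat{m}{l}{d}{n'}$ with $mn'-dl=(-1)^n$, so that, as in Appendix~\ref{ap2}, the reversed product applied to $\yaji{1}{0}$ gives $\pm(-1)^n\yaji{m}{-l}$. If the insertion is a single twist tangle $T(c)$ placed between $D_0$ and $D_0^*$, then $M=\mat{c}{1}{1}{0}$ and the resulting fraction is $\pm\yaji{(-1)^n c\,m^2}{(-1)^n c\,dm+1}$; the numerator forces $c=\pm 1$, and then $q\equiv\pm(dm\pm1)^{\pm1}\pmod{m^2}$, which is exactly family (1), never family (2). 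Concretely, for $m=5$, $d=3$ (so $d\mid 2m-1$) the target is $K(25,18)$, while the reachable classes are $q\in\{16,14,9,11\}$ (a set closed under inverses and negation mod $25$), and $18$ is not among them. If instead you stack two central twist regions, $M=\mat{c_1}{1}{1}{0}\mat{c_2}{1}{1}{0}$, the numerator becomes
\begin{equation*}
(c_1c_2+1)m^2+(c_2-c_1)lm-l^2 ,
\end{equation*}
and equating this with $\pm m^2$ forces $m\mid l^2$, impossible since $(l,m)=1$ and $m>1$; longer central words fail for the same reason. So no symmetric union of the doubled Conway form of $K(m,d)$ with insertions concentrated between the two halves can represent $K(m^2,d(m+1))$, and your induction would have nothing to verify.

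The paper's proof is top-down and avoids this obstruction entirely. Using $d(m+1)<m^2$ and $d\mid 2m-1$, it writes $d=2s+3$ and $2m-1=d(2t+3)$ with $s,t\ge 0$, hence $m=2st+3s+3t+5$, and verifies by direct calculation that
\begin{equation*}
\frac{m^2}{d(m+1)}=[\,t+1,2,s+1,t+1,2,s+1\,],
\end{equation*}
a length-six expansion with a \emph{repeated} (not sign-reversed palindromic) pattern. The symmetric union presentation is then obtained by diagram moves on this Conway form following \cite[Figure 10]{Lamm21}, which place the $\pm1$ insertion tangles at several positions along the axis rather than at a single central spot; the partial knot is read off from the resulting symmetric picture as $C(t+1,2,-s-2)$, and one checks $[t+1,2,-s-2]=(2st+3s+3t+5)/(2s+3)=m/d$. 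The idea your proposal is missing is precisely this change of viewpoint: for families (2) and (3) the symmetric diagram is not the doubled Conway form of the partial knot with a middle twist, so the arithmetic must be carried out on an explicit expansion of $K(m^2,d(m+1))$ itself (made possible by the explicit parametrization in $s,t$), not built up inductively from an arbitrary expansion of $K(m,d)$.
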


\begin{proof}
Since $d(m + 1) < m^2$, we have $2m - 1 > d > 1$, and $d$ must be odd because it divides $2m - 1$. 
Therefore, we can write $d = 2 s + 3$ and $ 2m - 1 = d ( 2 t + 3)$ for some $s, t \ge 0$, which implies that $m = 2 s t + 3s + 3t + 5$. 
Then we have the following, which can be verified by direct calculation. 
\[
\frac{m^2}{d(m+1)} = [ t+1 , 2 , s+1, t+1, 2 , s+1]
\]
The Conway form $C(t+1 , 2 , s+1, t+1, 2 , s+1)$ is modified as illustrated in Fig.~\ref{31} to obtain a symmetric union presentation of the knot. 
The modification of the diagram follows from \cite[Figure 10]{Lamm21}. 


\begin{figure}[H]
\centering
\begin{overpic}[width=.8\textwidth]{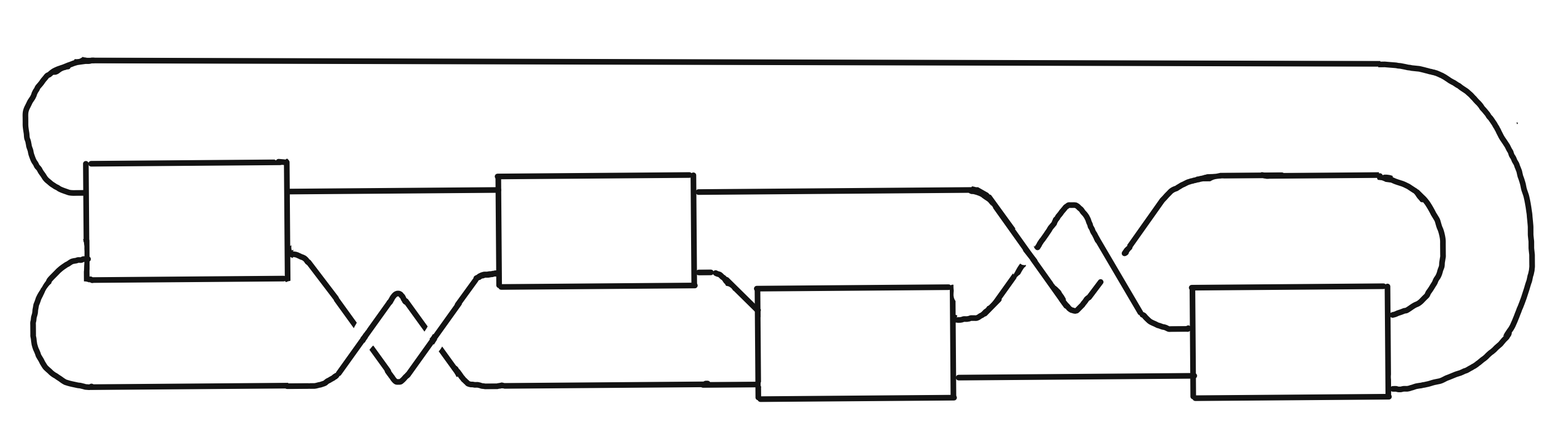}
\put(23,38){$t+1$}
\put(97,37){$s+1$}
\put(145,15){$t+1$}
\put(225,15){$s+1$}
\end{overpic}
\caption{The Conway form $C(t+1 , 2 , s+1, t+1, 2 , s+1)$. Twists in the horizontal boxes are designed in the same way as Fig.~\ref{7.5}.}
\end{figure}
\begin{figure}[H]
\centering
\begin{overpic}[width=.95\textwidth]{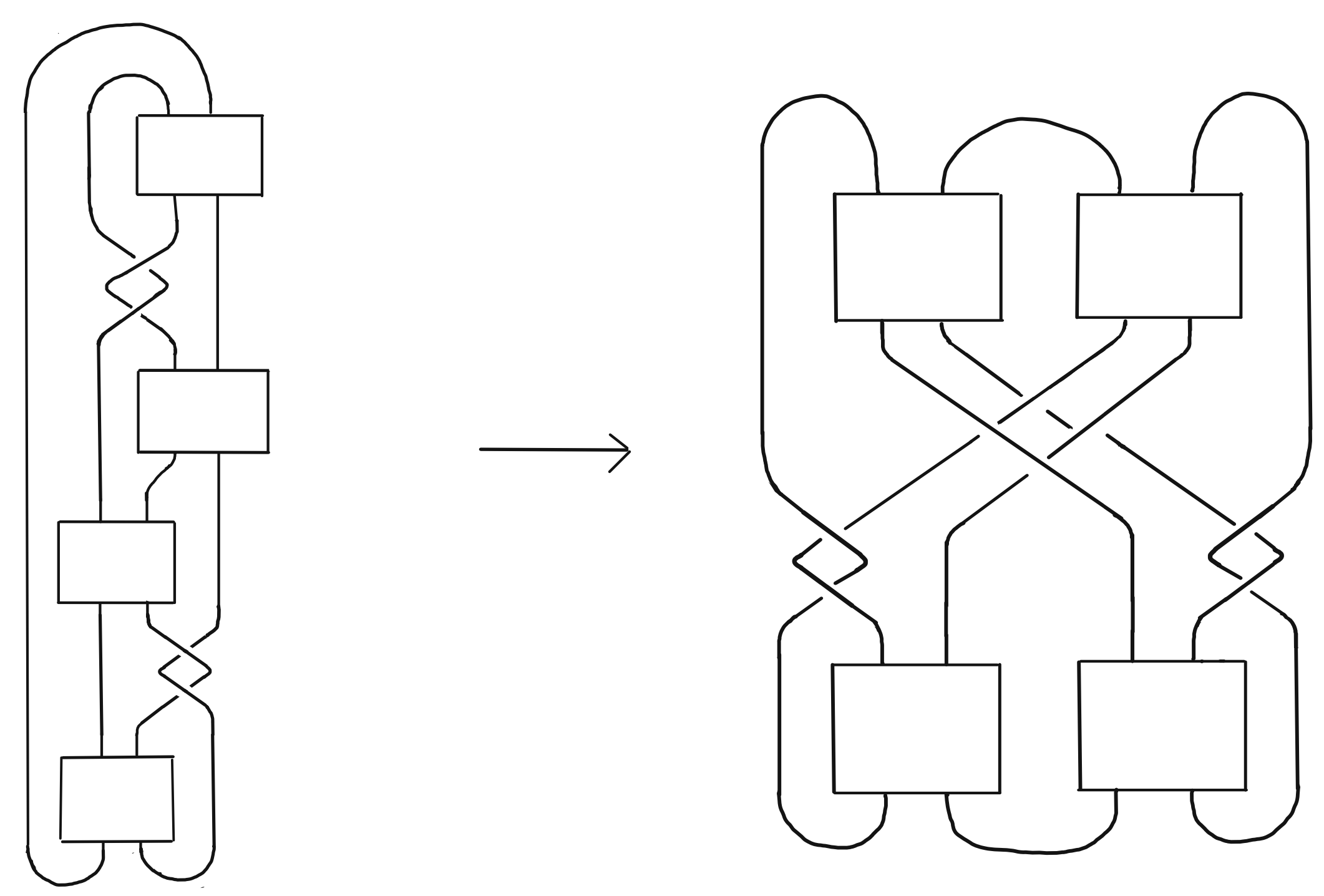}
  \put(37,189){$-s-1$}
  \put(38,121){$-t-1$}
  \put(19,82){$s+1$}
  \put(20,21){$t+1$}
  \put(223,161){$-s-1$}
  \put(289,161){$s+1$}
  \put(225,40){$t+1$}
  \put(288,40){$-t-1$}
\end{overpic}
\vspace*{8pt}
\caption{
Twists in the vertical boxes are designed in the same way as Fig.~\ref{6}.}\label{31}
\end{figure}

The partial knot of the obtained symmetric union is the two-bridge knot represented by $C(t+1,2,-s-2)$ as shown in Fig.~\ref{33}. 

\begin{figure}[H]
\centering
\begin{overpic}[width=\textwidth]{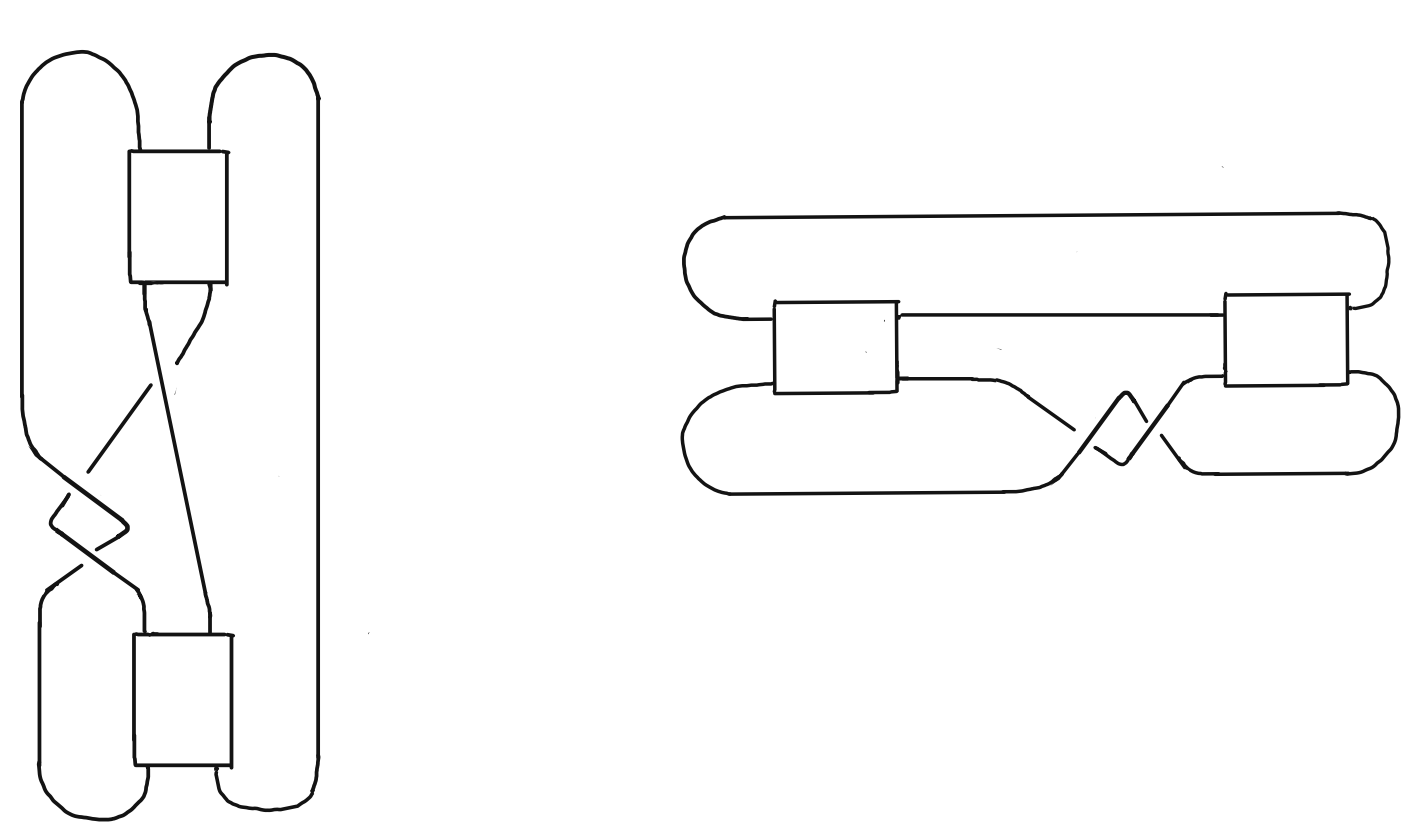}
\put(35,158){\tiny $-s-1$}
\put(39,31){\tiny $t+1$}
\put(203,120){\small $t+1$}
\put(314,120){\small $-s-2$}
\end{overpic}
\caption{The partial knot $C(t+1,2,-s-2)$. 
} \label{33}
\end{figure}

Since 
\[
[ t+1,2,-s-2 ] = \frac{2 s t + 3s + 3t + 5}{2s+3} = \frac{m}{d}, 
\]
the partial knot is the two-bridge knot $K(m,d)$. 
\end{proof}

\begin{claim}\label{Clm41}
A two-bridge knot $K(m^2 , d( m - 1))$, where $d >1$ divides $2m + 1$, is a ribbon knot which admits a symmetric union presentation with partial knot which is a two-bridge knot $K(m,d)$. 
\end{claim}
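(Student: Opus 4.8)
The plan is to mirror the structure of the proof of Claim~\ref{clm31} exactly, since Claim~\ref{Clm41} is its sign-dual. First I would dispose of the arithmetic set-up: from $d(m-1) < m^2$ and $d \mid 2m+1$ with $d>1$, I would deduce that $d$ is odd (a divisor of the odd number $2m+1$) and satisfies $d < 2m+1$. This lets me parametrize $d = 2s+3$ and $2m+1 = d(2t+3)$ for some integers $s,t \ge 0$, which solves to give $m = 2st + 3s + 3t + 4$. The point of this parametrization is that it puts both $m$ and $d$ into polynomial form in $s,t$, so that every subsequent continued-fraction identity reduces to a direct (if tedious) verification.

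Next I would exhibit the continued-fraction expansion of $m^2/\bigl(d(m-1)\bigr)$ analogous to the one in Claim~\ref{clm31}. I expect a six-term palindromic-type expansion of the form $[\,t+1,2,s+1,t+1,2,s+1\,]$ or a close variant with adjusted signs or entries dictated by the ``$m-1$'' rather than ``$m+1$'' in the denominator; the correct tuple is the one whose associated Conway form $C(\cdots)$ is susceptible to the same diagram surgery as Fig.~\ref{31}. I would state this expansion and assert it holds by direct calculation, then invoke the diagram modification from \cite[Figure 10]{Lamm21} to fold the Conway form into a symmetric union $D_0 \cup D_0^*$ of a partial diagram with its reflection. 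Because any symmetric union bounds a ribbon disk, this simultaneously certifies that the knot is ribbon.

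Finally I would identify the partial knot. Following the pattern of Fig.~\ref{33}, reading off the partial diagram should yield a three-term Conway form such as $C(t+1,2,-s-2)$, and I would then compute its continued fraction and check that
\[
[\,t+1,2,-s-2\,] = \frac{m}{d},
\]
using the polynomial expressions $m = 2st+3s+3t+4$ and $d = 2s+3$. This identity pins the partial knot down as $K(m,d)$, completing the claim.

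The main obstacle will be getting the continued-fraction tuple and the signs exactly right. The switch from $d \mid 2m-1$ to $d \mid 2m+1$ and from $m+1$ to $m-1$ shifts $m$ by one unit relative to Claim~\ref{clm31} ($m = 2st+3s+3t+4$ versus $+5$), and this shift propagates delicately through the expansion of $m^2/(d(m-1))$: the expansion may not be literally the same tuple, and an off-by-one error in any entry or a wrong sign would break either the symmetric-union surgery or the final $m/d$ identity. I would guard against this by verifying both the six-term expansion and the partial-knot identity as explicit rational-function identities in $s$ and $t$ before drawing any diagrams, rather than trusting the analogy with the previous claim. Everything downstream --- the ribbon conclusion and the partial-knot identification --- is then a routine transcription of the arguments already given.
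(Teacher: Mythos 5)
Your overall strategy coincides exactly with the paper's: the parametrization $d=2s+3$, $2m+1=d(2t+3)$, $m=2st+3s+3t+4$ is identical, as is the plan (a six-term continued fraction expansion, folding the Conway form into a symmetric union via the modification of \cite[Figure 10]{Lamm21}, then reading off the partial knot). However, the proposal stops short of the actual mathematical content of the claim: it never exhibits the continued-fraction expansion of $m^2/(d(m-1))$ nor the partial knot, and these two identities are precisely what the proof consists of. Moreover, both concrete candidates you do venture are the formulas from the other case ($q=d(m+1)$, $d\mid 2m-1$) and are false here. The correct expansion is
\[
\frac{m^2}{d(m-1)} = [\,t+2,\,-2,\,-s-1,\,t+2,\,-2,\,-s-1\,],
\]
not $[\,t+1,2,s+1,t+1,2,s+1\,]$: for instance with $s=0$, $t=1$ (so $m=7$, $d=3$) one has $49/18=[3,-2,-1,3,-2,-1]$, whereas $[2,2,1,2,2,1]=64/27$. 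Likewise the partial knot is $C(s+1,2,t+1)$, not $C(t+1,2,-s-2)$; your proposed identity fails because
\[
[\,t+1,2,-s-2\,]=\frac{2st+3s+3t+5}{2s+3},
\]
while here $m=2st+3s+3t+4$. The sign pattern is not cosmetic: the negative middle entries $-2$ are what make the folding surgery of the Conway form into a symmetric union go through.

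There is also a pitfall in your stated safeguard of checking the partial-knot identity ``as a literal rational-function identity.'' Reading the partial diagram in one natural order gives $[\,s+1,2,t+1\,]=\frac{m}{2t+3}$, whose denominator is $2t+3$ rather than $d=2s+3$; the identification with $K(m,d)$ uses either the reversed reading $[\,t+1,2,s+1\,]=\frac{m}{2s+3}$ or the two-bridge equivalence $K(p,q)\simeq K(p,q')$ when $qq'\equiv 1 \pmod{p}$, which applies since $(2s+3)(2t+3)=2m+1\equiv 1 \pmod{m}$. If you insisted on literal equality with $m/d$ in the wrong reading order, you would reject the correct partial knot. In short, the skeleton is right and matches the paper, but the deferred computations are the proof, and the placeholders offered in their stead do not hold.
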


\begin{proof}
As in the proof of the previous claim, we can write $d = 2 s + 3$ and $ 2m + 1 = d ( 2 t + 3)$ for some $s, t \ge 0$, which implies that $m = 2 s t + 3s + 3t + 4$. 
Then we have the following, which can be verified by direct calculation. 
\[
\frac{m^2}{d(m-1)} = [ t+2 , -2 , -s-1, t+2, -2 , -s-1 ]
\]
The Conway form $C(t+2 , -2 , -s-1, t+2, -2 , -s-1)$ is modified as illustrated in Fig.~\ref{41} to obtain a symmetric union presentation of the knot. 
The modification of the diagram follows from \cite[Figure 10]{Lamm21}.
\begin{figure}[H]
\centering
\begin{overpic}[width=.8\textwidth]{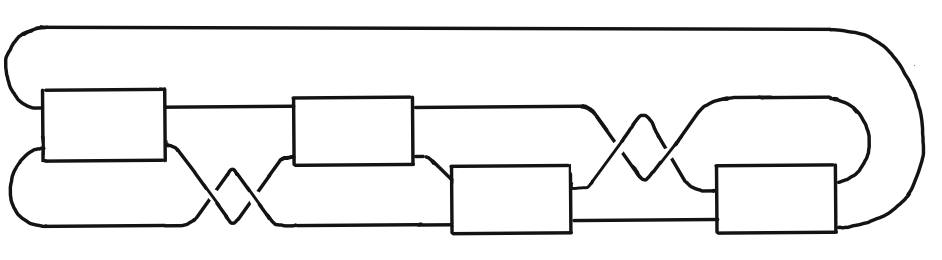}
\put(23,38){$t+2$}
\put(94,38){$-s-1$}
\put(145,15){$t+2$}
\put(223,15){$-s-1$}
\end{overpic}
\caption{The Conway form $C(t+2 , -2 , -s-1, t+2, -2 , -s-1)$}
\end{figure}
\begin{figure}[H]
\centering
\begin{overpic}[width=.95\textwidth]{clm32.png}
  \put(42,187){$t+2$}
  \put(38,122){$-s-1$}
  \put(15.5,81){$-t-2$}
  \put(20,22){$s+1$}
  \put(225,161){$t+2$}
  \put(285,161){$-t-2$}
  \put(225,39){$s+1$}
  \put(285,39){$-s-1$}
\end{overpic}
\caption{The modification of the diagram.} 
\label{41}
\end{figure}
The partial knot of the obtained symmetric union is the two-bridge knot represented by $C(s+1,2,t+1)$ as shown in Fig.~\ref{42}. 
\begin{figure}[H]
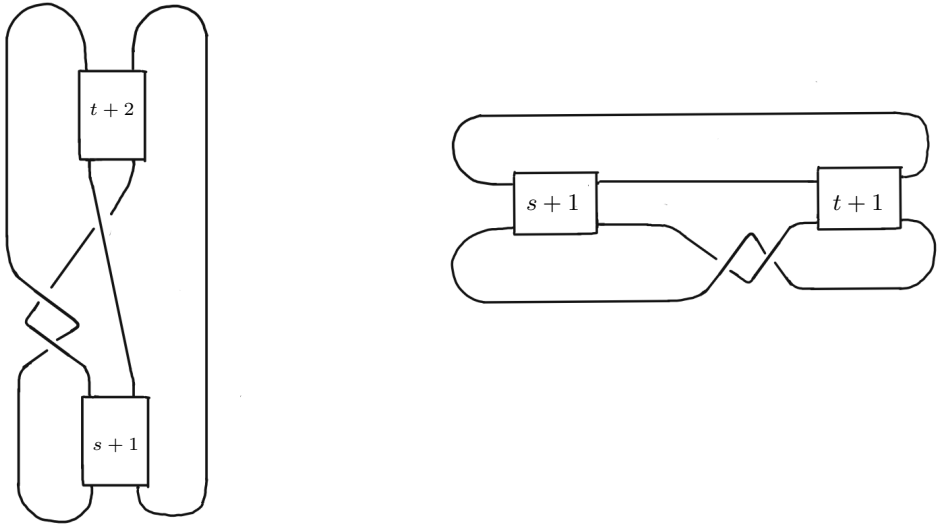

\centering
\begin{overpic}[width=\textwidth]{clm33.png}
\put(37,158){\scriptsize $t+2$}
\put(38,31){\scriptsize $s+1$}
\put(202,122){\small $s+1$}
\put(318,122){\small $t+1$}
\end{overpic}
\caption{The partial knot $C(s+1,2,t+1)$} \label{42}
\end{figure} 
Since we have 
\[
[ s+1,2,t+1 ] = \frac{2 s t + 3s + 3t + 4}{2s+3} = \frac{m}{d}, 
\]
the partial knot is the two-bridge knot $K(m,d)$. 
\end{proof}

\begin{claim}
A two-bridge knot $K(m^2 , d( m + 1))$, where $d >1$ is odd and divides $m + 1$, is a ribbon knot which admits a symmetric union presentation with partial knot which is a two-bridge knot $K(m, d)$.
\end{claim}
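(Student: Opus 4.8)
The plan is to mirror exactly the proof strategy used in Claims~\ref{clm31} and \ref{Clm41}, since the structure of the argument is identical: reduce the knot to a Conway form with a palindromic-type continued fraction, modify the diagram into a symmetric union, and then identify the partial knot via its own continued fraction. First I would set up the arithmetic. Since $d>1$ is odd and divides $m+1$, write $m+1 = d e$ for some integer $e$, and parametrize $d = 2s+3$ as before; the parity and divisibility constraints (together with $d(m+1)<m^2$) should pin down a clean expression for $m$ in terms of two nonnegative parameters, say $s$ and $t$, analogous to $m = 2st+3s+3t+5$ in Claim~\ref{clm31}. The key computational step is then to exhibit an explicit continued fraction expansion of $m^2/\bigl(d(m+1)\bigr)$ of the doubled/palindromic form $[b_1,\ldots,b_r,b_1,\ldots,b_r]$ (or a variant with a sign-flipped middle), which can be verified by direct calculation exactly as in the previous two claims.

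Next I would realize the Conway form $C(b_1,\ldots,b_r,b_1,\ldots,b_r)$ associated to this expansion and modify it diagrammatically, following \cite[Figure 10]{Lamm21}, into a symmetric union presentation $D_0 \cup D_0^*$. This is the step where the palindromic structure of the continued fraction is essential: it guarantees that the diagram splits into a half-diagram $D_0$ and its reflection $D_0^*$ joined across a symmetry axis by tangles $T(\infty)$ and $T(\varepsilon)$, so that the resulting diagram is genuinely a symmetric union and hence visibly a ribbon knot. The vertical twist boxes should be laid out in the same manner as Figs.~\ref{31} and \ref{41}.

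Finally I would read off the partial knot from the half-diagram $D_0$. As in the previous claims, the partial knot should be a Conway form $C(c_1,c_2,c_3)$ on three parameters built from $s$ and $t$, and the last step is to check that its continued fraction evaluates correctly:
\[
[c_1,c_2,c_3] = \frac{m}{d},
\]
thereby identifying the partial knot as $K(m,d)$. The main obstacle I anticipate is purely at the first arithmetic step: finding the correct explicit continued fraction expansion of $m^2/\bigl(d(m+1)\bigr)$ in the odd-$d$, $d \mid m+1$ case. The previous two claims used divisors of $2m\mp 1$, so both $m$ and the resulting coefficients had a specific shape dictated by $2m\mp 1 = d(2t+3)$; here the divisibility hypothesis is on $m+1$ rather than on $2m\mp 1$, so the parametrization of $m$ and the precise entries $b_i$ of the palindrome (and their signs) will differ, and getting these exactly right so that the telescoping matrix product collapses to $(m^2, d(m+1))$ is where the real work lies. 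Once the correct expansion is in hand, the diagrammatic modification and the partial-knot identification follow the established template essentially verbatim.
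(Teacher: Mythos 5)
You propose exactly the paper's strategy---parametrize $m$ from $d=2s+3$ and the divisibility hypothesis, exhibit an explicit symmetric continued fraction for $m^2/\bigl(d(m+1)\bigr)$, modify the Conway form into a symmetric union following Lamm, and identify the partial knot by evaluating its fraction---but the proposal stops at precisely the step that carries the proof, and the structural guess it makes about that step is wrong. The paper sets $m+1=d(t+2)$, so $m=2st+4s+3t+5$, and the expansion that works is
\[
\frac{m^2}{d(m+1)} = [\, t+2,\ -s-1,\ -2,\ t+2,\ 2,\ s+1 \,],
\]
whose symmetry pattern is $[b_1,b_2,b_3,b_1,-b_3,-b_2]$, \emph{not} the doubled pattern $[b_1,b_2,b_3,b_1,b_2,b_3]$ of Claims~\ref{clm31} and \ref{Clm41}. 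Accordingly, the diagram modification is not the one you plan to reuse: the doubled pattern is handled by the template of \cite[Figure 10]{Lamm21}, whereas this pattern requires the different modification of \cite[Figure 9]{Lamm21}, and it produces a \emph{two}-term partial knot $C(t+2,-2s-3)$ rather than the three-term one you predict; the proof then closes with $[\,t+2,-2s-3\,]=(2st+4s+3t+5)/(2s+3)=m/d$.

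The mismatch is not cosmetic: the doubled pattern of the earlier claims is genuinely unavailable for this family. Take $(s,t)=(1,0)$, i.e.\ $m=9$, $d=5$, and the knot $K(81,50)$ (note $d=5$ divides neither $2m-1=17$ nor $2m+1=19$, nor $m-1=8$, so this knot is covered only by the present claim). If $81/50=[b_1,b_2,b_3,b_1,b_2,b_3]$, then writing $M=\mat{a}{b}{c}{d'}$ for the matrix of $[b_1,b_2,b_3]$ (so $ad'-bc=-1$), the first column of $M^2$ forces $a(a+d')+1=\pm 81$ and $c(a+d')=\pm 50$ with matching signs, hence $a(a+d')\in\{80,-82\}$; running through the finitely many possible values of $a+d'$ shows that $b=(ad'+1)/c$ is never an integer. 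So there is no length-three doubled expansion of $81/50$ at all---in particular none of the shape $[x,\pm 2,y,x,\pm 2,y]$ that the Claims~\ref{clm31} and \ref{Clm41} templates require---and the ``established template essentially verbatim'' cannot even begin. Your plan correctly isolates where the real work lies, but that work (the expansion with the correct symmetry type, the Figure-9-type modification, and the verification that the resulting partial knot is $K(m,d)$) is exactly what is missing from the proposal.
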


\begin{proof}
Since $d(m + 1) < m^2$, we have $m + 1 > d > 1$, we can write $d = 2 s + 3$ and $ m + 1 = d ( t + 2)$ for some $s, t \ge 0$, which implies that $m = 2 s t + 4s + 3t + 5$. 
Then we have the following, which can be verified by direct calculation. 
\[
\frac{m^2}{d(m+1)} = [ t+2, -s-1, -2, t+2, 2 , s+1 ]
\]
The Conway form $C(t+2, -s-1, -2, t+2, 2 , s+1)$ is modified as illustrated in Fig.~\ref{51} to obtain a symmetric union presentation of the knot. 
The modification of the diagram is similar to \cite[Figure 9]{Lamm21}.
\begin{figure}[H]
\centering
\begin{overpic}[width=.8\textwidth]{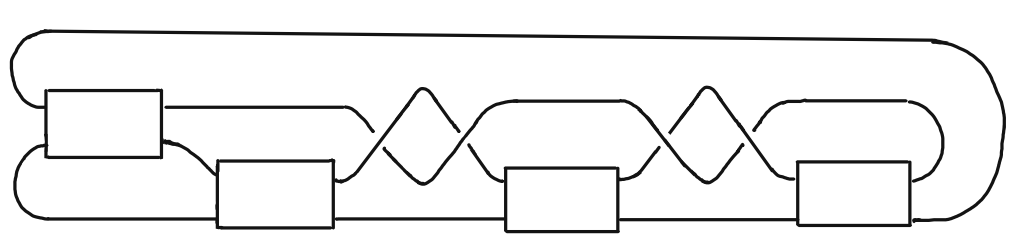}
\put(18,33){$t+2$}
\put(63,13){$-s-1$}
\put(148,12){$t+2$}
\put(230,13){$s+1$}
\end{overpic}
\caption{The Conway form $C(t+2, -s-1, -2, t+2, 2 , s+1)$}
\end{figure}
\begin{figure}[H]
\centering
\begin{overpic}[width=\textwidth]{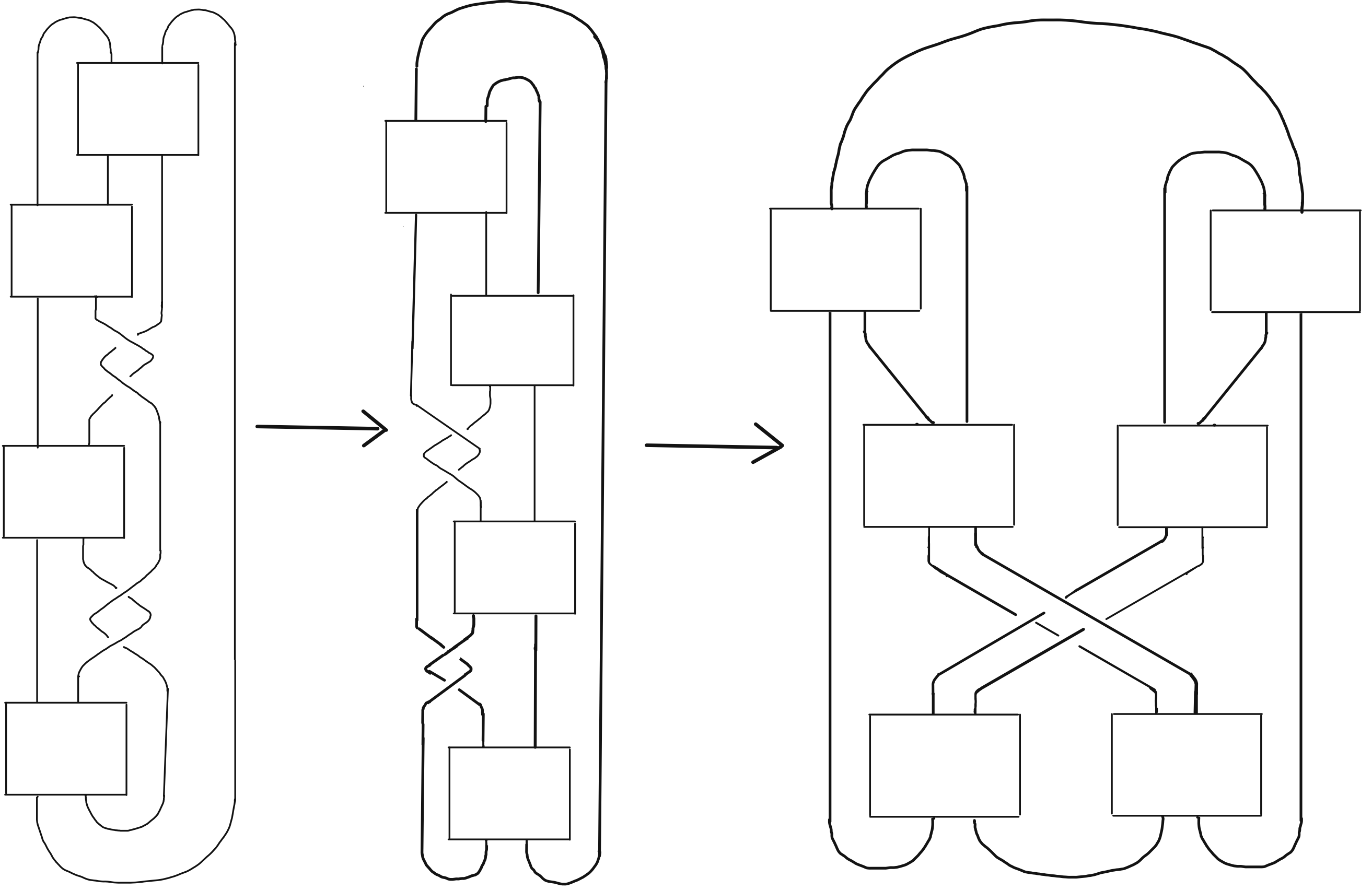}
 \put(25,202){$t+2$}
 \put(8,164){$s+1$}
 \put(1.5,100){$-t-2$}
 \put(2,34){$-s-1$}
 \put(105,186){$t+2$}
 \put(125,141){$s+1$}
 \put(120,80){$-t-2$}
 \put(119,21){$-s-1$}
 \put(210,164){$t+2$}
 \put(320,161){$-t-2$}
 \put(235,105){$s+1$}
 \put(298,105){$-s-1$}
 \put(235,28){$s+1$}
 \put(296,28){$-s-1$}
\end{overpic}
\caption{The modification of the diagram.} \label{51}
\end{figure}
The partial knot of the obtained symmetric union is the two-bridge knot represented by $C(t+2,-2s-3)$ as shown in Fig.~\ref{53}. 
\begin{figure}[H]
\centering
\begin{overpic}[width=.6\textwidth]{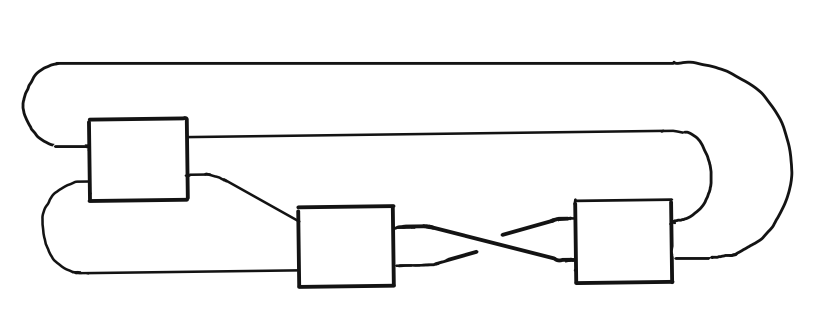}
\put(28,40){\scriptsize $t+2$}
\put(81,17){\tiny $-s-1$}
\put(154,18){\tiny $-s-1$}
\end{overpic}
\caption{The partial knot $C(t+2,-2s-3)$} \label{53}
\end{figure} 
Since we have 
\[
[ t+2, -2s-3 ] = \frac{2 s t + 4s + 3t + 5}{2s+3} = \frac{m}{d}, 
\]
the partial knot is the two-bridge knot $K(m,d)$.
\end{proof}

\begin{claim}
A two-bridge knot $K(m^2 , d( m - 1))$, where $d >1$ is odd and divides $m - 1$, is a ribbon knot which admits a symmetric union presentation with partial knot which is a two-bridge knot $K(m, d)$). 
\end{claim}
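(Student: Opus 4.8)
The plan is to follow the exact template established in the preceding three claims, which handle the cases $d(m+1)$ with $d\mid 2m-1$, $d(m-1)$ with $d\mid 2m+1$, and $d(m+1)$ with $d\mid m+1$. First I would record the arithmetic setup: since $d>1$ is odd and divides $m-1$, and since the slice condition forces $d(m-1)<m^2$, I can write $d=2s+3$ and $m-1=d(t+2)$ for suitable integers $s,t\ge 0$, which yields $m=2st+4s+3t+7$ (the precise constant will come out of the substitution and must be computed carefully, as it differs from the $m+1$ case). The key step is then to exhibit an explicit continued fraction expansion of $m^2/\bigl(d(m-1)\bigr)$ that is \emph{palindromic up to sign} in the sense required to produce a symmetric union, analogous to the expansion $[t+2,-s-1,-2,t+2,2,s+1]$ used in the previous claim; I expect the correct expansion to be of the form $[\,b_1,\ldots,b_k,\,\varepsilon,\,-b_k',\ldots\,]$ obtained by a sign-adjusted mirroring, and verifying this equals $m^2/(d(m-1))$ is a direct (if tedious) continued-fraction computation.

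Next I would produce the symmetric union presentation exactly as in Claim~4, by drawing the Conway form associated to this continued fraction and then applying the diagrammatic modification modeled on \cite[Figure 9]{Lamm21} (as opposed to Figure~10, since the parity/sign pattern here should match the $m+1$ odd-divisor case rather than the $2m\mp 1$ cases). This step is purely diagrammatic and mirrors the figures already displayed; the outcome is a symmetric union whose two halves are reflections of one another across the axis, guaranteeing that the resulting knot is ribbon.

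Finally I would identify the partial knot. After collapsing the symmetric union to one of its two reflected halves, the partial knot will be given by a short Conway form $C(\ldots)$ obtained from the truncation of the palindromic expansion, by analogy with $C(t+2,-2s-3)$ in the previous claim. I would then compute its associated continued fraction and check that it simplifies to $m/d$; concretely, I expect an identity of the shape
\[
[\,c_1,\ldots,c_r\,]=\frac{2st+4s+3t+7}{2s+3}=\frac{m}{d},
\]
which, combined with $(m,d)=1$ (immediate since $d\mid m-1$ forces $\gcd(m,d)=\gcd(1,d)=1$), shows the partial knot is precisely $K(m,d)$.

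The main obstacle I anticipate is pinning down the \emph{correct} palindromic continued fraction for $m^2/(d(m-1))$ in this final case: the sign conventions and the interleaving of the $t+2$, $s+1$, and $\pm 2$ blocks are delicate, and an error in the parity of a single entry breaks both the symmetric-union structure (so the knot need not visibly be ribbon) and the final simplification to $m/d$. I would resolve this by first guessing the expansion from the pattern of the three earlier claims and then verifying it rigorously via Lemma~\ref{clm1}, reducing the check of both $m^2/(d(m-1))$ and the partial-knot fraction $m/d$ to matrix-product identities in $SL_2(\mathbb{Z})$ up to sign, which are mechanical to confirm.
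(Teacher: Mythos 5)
Your outline does reproduce the architecture of the paper's proof of this claim: parametrize $d=2s+3$ and the quotient $(m-1)/d$, exhibit an explicit length-six continued fraction for $m^2/(d(m-1))$, apply the diagrammatic modification modeled on Figure~9 of Lamm (your choice of Figure~9 over Figure~10 is correct, as is your remark that $d\mid m-1$ gives $(m,d)=1$, and your parametrization $m-1=d(t+2)$ is legitimate since $(m-1)/d$ is automatically even). However, the proposal defers precisely the step that constitutes the mathematical content of the claim: the explicit expansion and the explicit partial knot are never produced, only promised. In the paper, writing $m-1=d(t+1)$, hence $m=2st+2s+3t+4$, the required identities are
\[
\frac{m^2}{d(m-1)} = [\,t+1,\ s+1,\ 2,\ t+1,\ -2,\ -s-1\,],
\qquad
[\,t+1,\ 2s+3\,]=\frac{2st+2s+3t+4}{2s+3}=\frac{m}{d},
\]
and the partial knot is $C(t+1,2s+3)$. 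Without these (or their analogues in your parametrization), there is no proof.

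More seriously, the guiding ansatz you state would fail if followed literally. You expect an expansion of the shape $[\,b_1,\ldots,b_k,\varepsilon,-b_k,\ldots,-b_1\,]$ with a central $\varepsilon=\pm1$, as in Theorem~\ref{MainThm}. But Claim~\ref{clm2} shows that any such expansion evaluates to a fraction of the form $\beta^2/(\beta\gamma\pm1)$ with $\beta/\gamma=[b_1,\ldots,b_k]$, i.e.\ its denominator is congruent to $\pm1$ modulo $\beta$. Here the denominator is $d(m-1)\equiv -d \pmod{m}$, and since $d>1$ is a proper odd divisor of the even number $m-1$ we have $1<d\le (m-1)/2$, so $d(m-1)\not\equiv\pm1\pmod{m}$; hence no expansion of that centrally mirrored shape can represent $m^2/(d(m-1))$. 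The correct expansion is of repeated-block type (like $[t+2,-s-1,-2,t+2,2,s+1]$ in the previous claim, which you cite as an analogy but which is not of your stated form), and consequently the symmetric-union structure is not visible in the Conway form itself; it is produced only by the nontrivial diagrammatic isotopy in the figures. So the plan ``guess a sign-mirrored palindrome and verify via Lemma~\ref{clm1}'' does not converge to a proof; the block-doubling pattern must be guessed instead, and then verified, which is exactly the computation your proposal leaves undone.
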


\begin{proof}
As in the proof of the previous claim, we can write $d = 2 s + 3$ and $ m - 1 = d ( t + 1)$ for some $s, t \ge 0$, which implies that $m = 2 s t + 2s + 3t + 4$. 
Then we have the following, which can be verified by direct calculation. 
\[
\frac{m^2}{d(m-1)} = [ t+1, s+1, 2, t+1, -2 , -s-1 ]
\]
The Conway form $C(t+1, s+1, 2, t+1, -2 , -s-1)$ is modified as illustrated in Fig.~\ref{61} to obtain a symmetric union presentation of the knot. 
The modification of the diagram follows from \cite[Figure 9]{Lamm21}.
\begin{figure}[H]
\centering
\begin{overpic}[width=.8\textwidth]{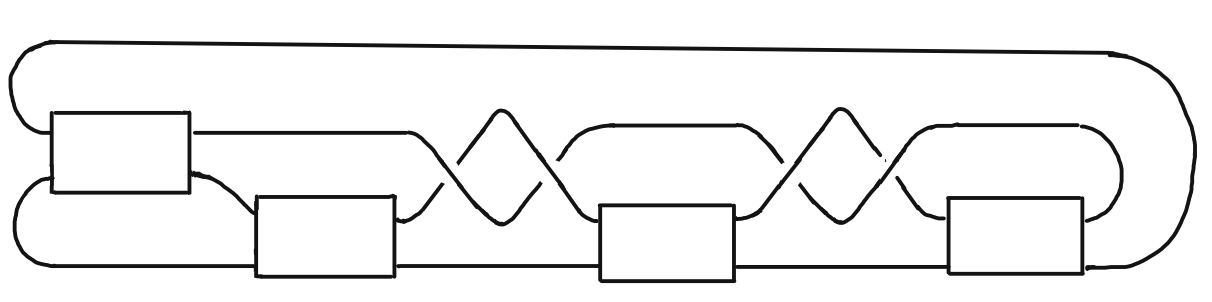}
\put(18,33){$t+1$}
\put(65,13){$s+1$}
\put(148,12){$t+1$}
\put(226,13){$-s-1$}
\end{overpic}
\caption{The Conway form $C(t+1, s+1, 2, t+1, -2 , -s-1)$}
\end{figure}
\begin{figure}[H]
\centering
\begin{overpic}[width=\textwidth]{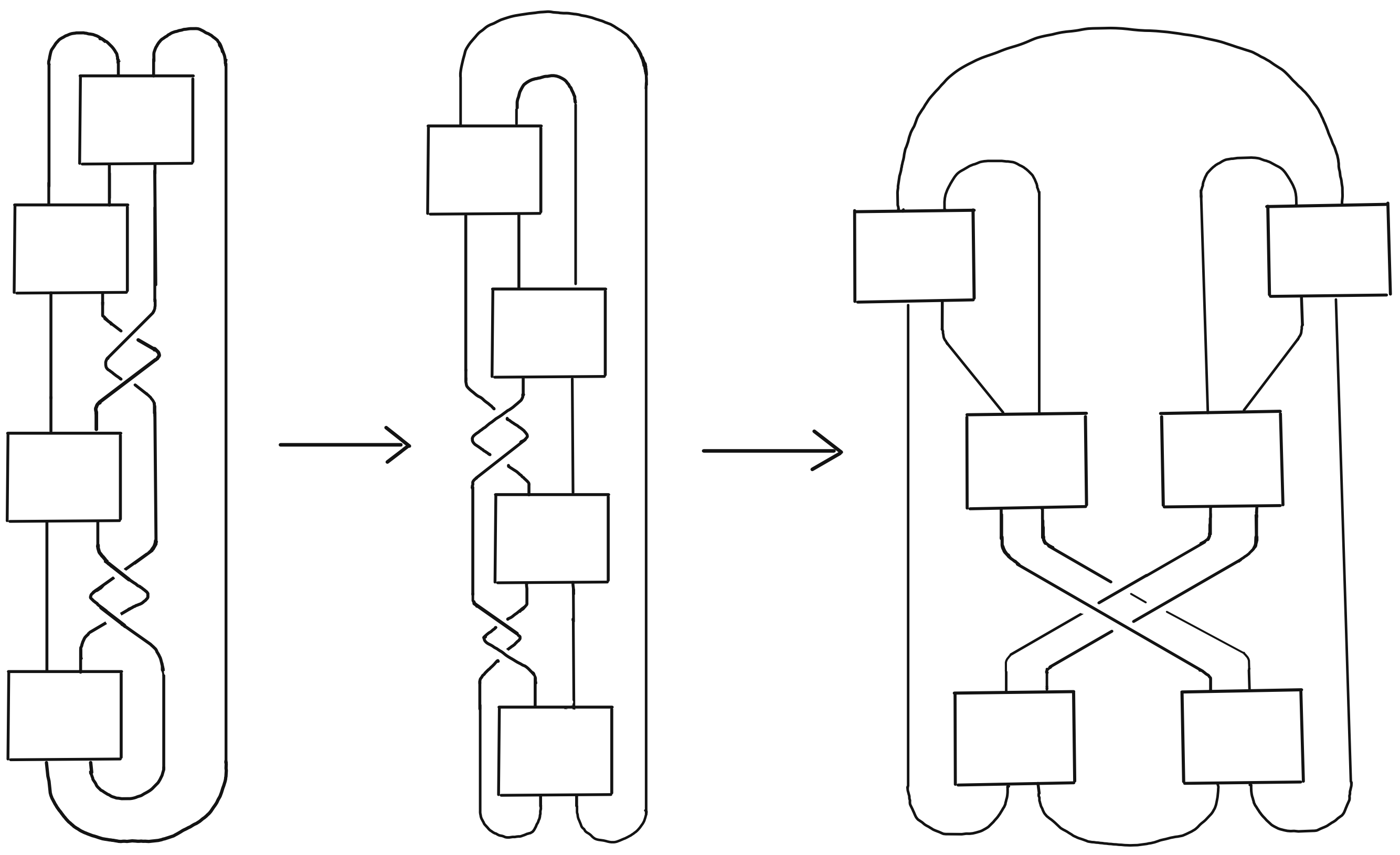}
 \put(25,185){$t+1$}
 \put(3,154){$-s-1$}
 \put(2,95){$-t-1$}
 \put(5,32){$s+1$}
 \put(115,173){$t+1$}
 \put(126,130){$-s-1$}
 \put(126,76){$-t-1$}
 \put(130,24){$s+1$}
 \put(225,150){$t+1$}
 \put(328,150){$-t-1$}
 \put(249.5,97){$-s-1$}
 \put(304,97){$s+1$}
 \put(246.5,26){$-s-1$}
 \put(309,27){$s+1$}
\end{overpic}
\caption{The modification of the diagram.} \label{61}
\end{figure}
The partial knot of the obtained symmetric union is the two-bridge knot represented by $C(t+1, 2s+3)$ as shown in Fig.~\ref{63}. 
\begin{figure}[H]
\centering
\begin{overpic}[width=.6\textwidth]{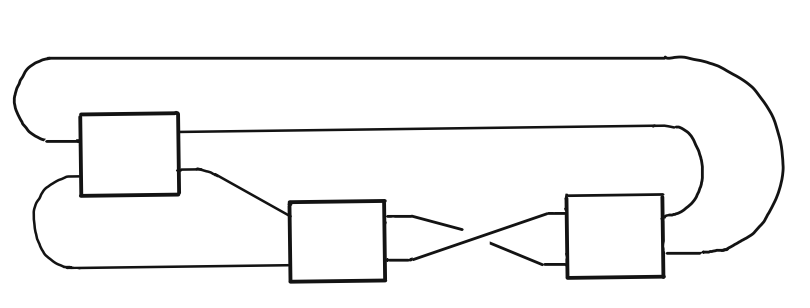}
\put(26,38){\scriptsize $t+1$}
\put(81,15){\scriptsize $s+1$}
\put(155,17){\scriptsize $s+1$}
\end{overpic}
\caption{The partial knot $C(t+1, 2s+3)$} \label{63}
\end{figure}
Since we have 
\[
[ t+1, 2s+3 ] = \frac{2 s t + 2s + 3t + 4}{2s+3} = \frac{m}{d}, 
\]
the partial knot is the two-bridge knot $K(m,d)$.
\end{proof}

The proof of Theorem~\ref{MainThm2} is now complete. 
\end{proof}

\begin{remark}
    The fractions $\frac{m^2}{d(m+1)}$ would have even continued fraction expansions of length 6, and so, the associated two-bridge knots are of genus 3. 
    On the other hand, our modifications of the diagrams in the proofs of the claims can be applied for non-even continued fractions and also for $d <-1$. 
    For example, for $[t+1,2, s+1, t+1,2, s+1]$ in Claim \ref{clm31}, when $s=t=0$, we see $\frac{25}{18}=[1,2,1,1,2,1]$. 
    From $[1,2,1,1,2,1]$, we have a symmetric union presentation of the knot $K(25,18)$ with a partial knot associated to $\frac{5}{3}=[1,2,-2]$. 
    Also we have $\frac{25}{18}=[2,-2, 2, 2,-2, 2]$. 
    The continued fraction $[2,-2, 2, 2,-2, 2]$ appears in Claim~\ref{Clm41} as $[t + 2, -2, -s-1, t+2, -2, -s-1]$ with $(s,t)=(-3,0)$ and $(m,d)=(-5,-3)$. 
    In this case, a symmetric union presentation of the knot $K(25,18)$ with a partial knot associated to $\frac{5}{3}=[-2,2,1]$ is obtained. 
\end{remark}


\appendix

\section{}

We here include an elementary proof of the following lemma. 

\begin{lemma}
Let $p,q,a_{i}$ be integers for $i=1, \ldots, n$ with $(p,q)=1$. 
Then 
$$\dfrac{p}{q}
=[a_1 ,\ldots,a_n]$$
if and only if 
$$ \yaji{p}{q} 
= \pm \mat{a_1}{1}{1}{0} \cdots \mat{a_n}{1}{1}{0} \yaji{1}{0} ,$$
where 
we regard 
$\dfrac{1}{\infty }$ as $0$ and $a_{i}+\dfrac{1}{0}$ as $\infty $. 
\end{lemma}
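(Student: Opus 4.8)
The plan is to prove the biconditional by induction on $n$, exploiting the fact that both the continued fraction expansion and the matrix product are built recursively by prepending the leading entry $a_1$. The key observation is that the matrix $\mat{a_1}{1}{1}{0}$ acts on the vector $\yaji{p'}{q'}$, representing the tail of the expansion, by the rule $\mat{a_1}{1}{1}{0}\yaji{p'}{q'}=\yaji{a_1 p' + q'}{p'}$, which mirrors exactly the arithmetic of the continued fraction, since $[a_1,a_2,\ldots,a_n]=a_1+\dfrac{1}{[a_2,\ldots,a_n]}$ and $a_1 + \dfrac{q'}{p'} = \dfrac{a_1 p' + q'}{p'}$. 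So the algebraic content of the lemma is precisely that left-multiplication by the elementary matrix realizes one step of the continued-fraction recursion on the associated fraction.

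First I would establish the base case $n=1$: here $[a_1]=a_1/1$, and $\mat{a_1}{1}{1}{0}\yaji{1}{0}=\yaji{a_1}{1}$, so both sides agree (up to the global sign, and after noting $(a_1,1)=1$ always holds). I would then set up the inductive step. Writing $M_j = \mat{a_j}{1}{1}{0}$, I define the tail fraction $p'/q' = [a_2,\ldots,a_n]$, which by the inductive hypothesis satisfies $\yaji{p'}{q'} = \pm M_2\cdots M_n \yaji{1}{0}$. For the forward direction, assuming $p/q=[a_1,\ldots,a_n]$, the recursion gives $\dfrac{p}{q} = a_1 + \dfrac{q'}{p'} = \dfrac{a_1 p' + q'}{p'}$; I then compute $M_1 \yaji{p'}{q'} = \yaji{a_1 p' + q'}{p'}$, which is a vector of the correct ratio, and I must argue that it is in fact $\pm\yaji{p}{q}$ as a vector, not merely up to scaling — this is where coprimality enters.

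The main obstacle, and the step deserving real care, is controlling the transition between equality of fractions and equality of integer vectors. The hypothesis $(p,q)=1$ guarantees $\yaji{p}{q}$ is primitive, and I would show that $\yaji{a_1 p' + q'}{p'}$ is likewise primitive: since each $M_j$ has determinant $-1$, the product $M_1\cdots M_n$ is unimodular, so it carries the primitive vector $\yaji{1}{0}$ to a primitive vector. Two primitive integer vectors with the same ratio differ only by the sign $\pm 1$, which is exactly the ambiguity recorded in the statement. I would also address the degenerate conventions — treating $1/\infty$ as $0$ and $a_i + 1/0$ as $\infty$ — which handle the case where an intermediate tail denominator vanishes; these conventions are precisely chosen so that the vector identity $M_j\yaji{1}{0}=\yaji{a_j}{1}$ and the fraction recursion remain consistent even when a partial quotient produces a $0$ in the denominator slot. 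The converse direction is then immediate by reversing the computation: given the matrix identity, peel off $M_1$ to recover the tail vector, apply the inductive hypothesis to identify its ratio as $[a_2,\ldots,a_n]$, and read off $p/q = a_1 + 1/[a_2,\ldots,a_n] = [a_1,\ldots,a_n]$.
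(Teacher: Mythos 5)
Your proposal is correct and follows essentially the same route as the paper's own proof: induction on $n$, using the recursion $[a_1,\ldots,a_n]=a_1+1/[a_2,\ldots,a_n]$ together with the matrix action $\mat{a_1}{1}{1}{0}\yaji{p'}{q'}=\yaji{a_1p'+q'}{p'}$ on the tail, and resolving the fraction-versus-vector ambiguity via primitivity (the paper phrases this as coprimality of $a_1p'+q'$ and $p'$, you phrase it via unimodularity of the matrix product, but these are the same point).
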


\begin{proof}
We prove the claim by induction on $n$. 

First we show the if part. 
When $n=1$, we have $\dfrac{p}{q}=a_1=\dfrac{a_1}{1}$ and 
\begin{equation*}
\yaji{p}{q}=\pm \mat{a_1}{1}{1}{0} \yaji{1}{0}=\pm \yaji{a_1}{1} ,
\end{equation*}
and so, the assertion holds. 

Assume that it holds for $n \leq k-1$ with $k \geq 2$. 
Let $p', q'$ denote coprime integers defined by 
\begin{equation}
\dfrac{p'}{q'} = [a_2 ,\ldots,a_k]. \label{eq1}
\end{equation}
Then, by the assumption of the induction, we have 
\begin{equation*}
\yaji{p'}{q'}=\pm \mat{a_2}{1}{1}{0} \cdots \mat{a_k}{1}{1}{0} \yaji{1}{0}. 
\end{equation*}

When $n=k$, by Eq.~\eqref{eq1}, we obtain 
\begin{align*}
\dfrac{p}{q}
&=[a_1,a_2,\ldots ,a_k] = a_1+ \dfrac{1}{\dfrac{p'}{q'}}
= \dfrac{a_{1}p'+q'}{p'}. 
\end{align*}
Here, by the assumption of the induction 
together with the fact that $a_{1}p'+q'$ and $p'$ are coprime integers, 
the following holds for $p,q$: 
\begin{align*} 
\yaji{p}{q}
&=\pm \yaji{a_{1}p'+q'}{p'} = \pm \mat{a_1}{1}{1}{0} \yaji{p'}{q'}\\
&=\pm \mat{a_1}{1}{1}{0} \mat{a_2}{1}{1}{0} \cdots \mat{a_k}{1}{1}{0} \yaji{1}{0}.
\end{align*}
Therefore the assertion holds for $n=k$. 

Next we show the only if part. 
When $n=1$, by 
\begin{equation*}
\yaji{p}{q} = \pm \mat{a_1}{1}{1}{0} \yaji{1}{0} = \pm \yaji{a_1}{1}, 
\end{equation*}
we have $p= \pm a_1, \ q= \pm 1 \ $. 
Thus $\dfrac{p}{q}=a_1$, and the assertion holds. 

Assume that it holds for $n \leq k-1$ with $k \geq 2$. 
Let $p', q'$ denote coprime integers defined by 
\begin{equation}
\yaji{p'}{q'}=\pm \mat{a_2}{1}{1}{0} \cdots \mat{a_k}{1}{1}{0} \yaji{1}{0}. \label{eq2}
\end{equation}
Then, by the assumption of the induction, we have 
\begin{equation*}
\dfrac{p'}{q'}=[a_2 ,\ldots ,a_k]. 
\end{equation*}

When $n=k$, by Eq.~\eqref{eq2}, we obtain 
\begin{align*}
\yaji{p}{q}
&=\pm \mat{a_1}{1}{1}{0}\mat{a_2}{1}{1}{0} \cdots \mat{a_k}{1}{1}{0} \yaji{1}{0}\\
&=\pm \mat{a_1}{1}{1}{0} \yaji{p'}{q'}\\
&=\pm \yaji{a_{1}p'+q'}{p'}. 
\end{align*}
Here, by the assumption of the induction togther with the fact that $a_{1}p'+q'$ and $p'$ are coprime integers, 
$p=\pm (a_{1}p'+q')$ and $q=\pm p'$ for $p,q$. 

Consequently we have 
\begin{align*}
\dfrac{p}{q}
&=\dfrac{a_{1}p'+q'}{p'}= a_{1}+\dfrac{q'}{p'}\\
&=a_{1}+\dfrac{1}{\dfrac{p'}{q'}}= a_1+ \dfrac{1}{a_2+ \dfrac{1}{\ddots + \dfrac{1}{a_k}}}
\end{align*}
and so, the assertion holds for $n=k$. 
\end{proof}

 \section{}\label{ap2}

In the reviewing process, simple proofs of Claims~\ref{clm2} and \ref{clm3} are presented by the referee. 
We include them here. 

We first prepare the following lemma. 
This is also well-known and can be proved in the same way as in the previous section. 

\begin{lemma}\label{lemB}
Let $p$, $q$, $a_{i}$ be integers for $i=1, \ldots, n$ with $(p,q)=1$. 
Then 
$$\dfrac{p}{q}
=[a_1 ,\ldots,a_n]$$
if and only if 
$$ \mat{p}{q}{r}{s}
= \pm \mat{a_1}{1}{1}{0} \cdots \mat{a_n}{1}{1}{0} $$
with $ps-qr=(-1)^n$, where we regard 
$\dfrac{1}{\infty }$ as $0$ and $\dfrac{1}{0}$ as $\infty$. 
\end{lemma}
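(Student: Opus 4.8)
The plan is to argue by induction on $n$, mirroring the two-direction induction used for Lemma~\ref{clm1} in the previous section. The key observation is that Lemma~\ref{lemB} only refines Lemma~\ref{clm1}: it records the whole $2 \times 2$ matrix together with its determinant rather than just its first column, so the bulk of the work is already done there, and only two extra ingredients — the determinant relation and the second column — need to be carried along.

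First I would dispose of the determinant condition, which costs essentially nothing. Since $\det \mat{a_i}{1}{1}{0} = -1$ for every $i$, multiplicativity of the determinant gives $\det\left(\mat{a_1}{1}{1}{0}\cdots\mat{a_n}{1}{1}{0}\right) = (-1)^n$, and the global sign $\pm 1$ leaves a $2 \times 2$ determinant unchanged. Hence $ps - qr = (-1)^n$ holds automatically whenever $\mat{p}{q}{r}{s} = \pm\mat{a_1}{1}{1}{0}\cdots\mat{a_n}{1}{1}{0}$; this part needs no induction and can be recorded once at the outset.

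For the remaining equivalence I would reuse the computation from the appendix almost verbatim. In the base case $n=1$ the product is $\mat{a_1}{1}{1}{0}$, giving $p/q = a_1 = [a_1]$. For the inductive step I peel off the leftmost factor: writing the product of the last $n-1$ matrices as $\mat{p'}{q'}{r'}{s'}$, the inductive hypothesis identifies the ratio of its first column with $[a_2,\ldots,a_n]$, and then $\mat{a_1}{1}{1}{0}\yaji{p'}{r'} = \yaji{a_1 p' + r'}{p'}$ realizes $a_1 + 1/[a_2,\ldots,a_n] = [a_1,\ldots,a_n]$; the only-if direction runs the same computation backwards. The second column is pinned down by the same recursion applied to the truncation $[a_1,\ldots,a_{n-1}]$, and coprimality of the relevant entries survives each step precisely because the determinant is $\pm 1$.

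As with Lemma~\ref{clm1}, the one delicate point I expect is the bookkeeping of the global sign $\pm$ and of coprimality through the induction — ensuring that the sign chosen at stage $n-1$ propagates consistently to stage $n$, and that the entries encoding the continued-fraction value remain coprime so that the fraction is well defined. No genuinely new difficulty should arise beyond this, since the determinant condition is automatic and the first-column correspondence is literally the content of Lemma~\ref{clm1}.
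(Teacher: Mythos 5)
Your proposal is correct and takes essentially the same route as the paper, which gives no separate argument for Lemma~\ref{lemB} beyond remarking that it ``can be proved in the same way as in the previous section'': you rerun the two-direction induction behind Lemma~\ref{clm1} and add the (correct) observation that the determinant condition comes for free, since each factor $\mat{a_i}{1}{1}{0}$ has determinant $-1$ and an overall sign does not change a $2\times 2$ determinant.

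One caution, which concerns the statement more than your argument. Your inductive step forms the fraction from the first \emph{column} of the matrix: you write the product of the trailing factors as $\mat{p'}{q'}{r'}{s'}$ but then compute with $\yaji{p'}{r'}$, so the quantity you relate to $[a_2,\ldots,a_n]$ is $p'/r'$, not the $p'/q'$ that the statement, read literally with $q'$ in the $(1,2)$ slot, would require. The first-column reading is the correct and intended one: the $(1,2)$ entry of $\mat{a_1}{1}{1}{0}\cdots\mat{a_n}{1}{1}{0}$ is the numerator of the truncated fraction $[a_1,\ldots,a_{n-1}]$, so under the literal row reading the claim $p/q=[a_1,\ldots,a_n]$ already fails for $\mat{1}{1}{1}{0}\mat{2}{1}{1}{0}=\mat{3}{1}{2}{1}$, where $[1,2]=3/2$ equals $p/r$ while $p/q=3$. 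So your proof, as written, establishes the corrected statement $\mat{p}{r}{q}{s}=\pm\mat{a_1}{1}{1}{0}\cdots\mat{a_n}{1}{1}{0}$ (equivalently, $p,q$ read down the first column); this is exactly the form in which the lemma is used in Appendix~\ref{ap2}, where one writes $\mat{a_1}{1}{1}{0}\cdots\mat{a_n}{1}{1}{0}=\pm\mat{m}{l}{k}{n}$ with $m/k=[a_1,\ldots,a_n]$. Note that the condition $ps-qr=(-1)^n$ is symmetric under swapping $q$ and $r$, so the determinant clause needs no change; but you should either flag the transposition or state explicitly which convention you prove, since otherwise your induction hypothesis and the displayed statement do not match.
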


Then, under the setting of Theorem~\ref{MainThm}, we have the following in the proof of the theorem, which follows from \cite[Proof of Theorem 6]{Kanenobu86}. 

\setcounter{claim}{0}
\begin{claim}
The equations \eqref{eq5} and \eqref{eq6} implies 
\begin{equation*}
\yaji{p}{q}=\pm \yaji{\varepsilon (-1)^{n} \, m^2}{\varepsilon (-1)^{n} \, m k + 1} . 
\end{equation*}
\end{claim}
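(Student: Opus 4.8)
The plan is to bypass the three-term recursion entirely and instead read $\yaji{p}{q}$ off a single $2\times2$ matrix identity, exploiting the palindromic shape of the continued fraction $[a_1,\ldots,a_n,\varepsilon,-a_n,\ldots,-a_1]$ together with the matrix form in Lemma~\ref{lemB}. First I would apply Lemma~\ref{lemB} to Eq.~\eqref{eq5}: since $m/k=[a_1,\ldots,a_n]$ with $(m,k)=1$, there are integers $q',s'$ such that, up to sign,
\[
A:=\mat{a_1}{1}{1}{0}\cdots\mat{a_n}{1}{1}{0}=\mat{m}{q'}{k}{s'},\qquad ms'-q'k=(-1)^n.
\]
The crucial observation is that the determinant condition $\det A=(-1)^n$ is exactly what will supply the constant $+1$ in the bottom entry of the target vector, so the auxiliary entries $q',s'$ never need to be computed individually.

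Next I would relate the reversed block on the right of Eq.~\eqref{eq6} to $A$ by a transpose-conjugation. Setting $L=\mat{1}{0}{0}{-1}$, each factor satisfies $\mat{-a_i}{1}{1}{0}=-L\mat{a_i}{1}{1}{0}L$; since $L^2$ is the identity and every $\mat{a_i}{1}{1}{0}$ is symmetric, reversing the order of the factors turns the product into a transpose, so up to the global sign $(-1)^n$ (which is absorbed by the ambient $\pm$)
\[
\mat{-a_n}{1}{1}{0}\cdots\mat{-a_1}{1}{1}{0}=\pm\,L\,A^{T}L=\pm\mat{m}{-k}{-q'}{s'}.
\]
With this substitution Eq.~\eqref{eq6} reduces to the explicit product of $A$, the central factor $\mat{\varepsilon}{1}{1}{0}$ coming from the middle entry $\varepsilon$, and $L A^{T}L$, all applied to $\yaji{1}{0}$. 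Multiplying out, the off-diagonal entry $q'$ cancels in the top slot to leave $\varepsilon m^2$, while in the bottom slot the combination $s'm-q'k=\det A$ emerges, giving
\[
\yaji{p}{q}=\pm\yaji{\varepsilon m^2}{\varepsilon mk+(-1)^n}.
\]

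Finally I would renormalize: multiplying this vector through by $(-1)^n$, which is permitted since it is determined only up to $\pm$, turns it into $\pm\yaji{\varepsilon(-1)^n m^2}{\varepsilon(-1)^n mk+1}$, which is exactly Eq.~\eqref{eq7}. The step I expect to demand the most care is the transpose-conjugation identity for the reversed block: one must check simultaneously the factorwise relation $\mat{-a_i}{1}{1}{0}=-L\mat{a_i}{1}{1}{0}L$, that reversing the order of symmetric factors is literally transposition, and that the accumulated global sign $(-1)^n$ is harmless against the ambient $\pm$. Once that identity is in place the remainder is a single explicit $2\times2$ multiplication, and the only other subtlety is recognizing that the $(-1)^n$-renormalization at the end is precisely what reconciles $\varepsilon mk+(-1)^n$ with the stated form $\varepsilon(-1)^n mk+1$.
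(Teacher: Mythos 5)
Your proof is correct and is essentially the same argument as the paper's Appendix~\ref{ap2} (referee's) proof of this claim: apply Lemma~\ref{lemB} to Eq.~\eqref{eq5}, identify the reversed negated block $\mat{-a_n}{1}{1}{0}\cdots\mat{-a_1}{1}{1}{0}$ with a signed transpose of $A$, and finish with a single $2\times 2$ multiplication in which $\det A=(-1)^n$ supplies the constant $1$, followed by the harmless renormalization by $(-1)^n$. The only cosmetic differences are that you justify the reversed-block identity by conjugation with $L=\mat{1}{0}{0}{-1}$ where the paper asserts it directly, and that you (correctly) take the middle factor to be $\mat{\varepsilon}{1}{1}{0}$ rather than the misprinted $\mat{\varepsilon}{1}{0}{0}$ appearing in Eq.~\eqref{eq6}.
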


\begin{proof}
From Eq.\eqref{eq5}, the next can be proved readily. 
\begin{equation*}
\mat{-a_1}{1}{1}{0} \cdots \mat{-a_n}{1}{1}{0}
=\pm (-1)^n \mat{m}{-l}{-k}{n} 
\end{equation*}
with $mn-kl=(-1)^n$.
Then we have the following. 
\begin{align*}
\yaji{p}{q}=&\pm \mat{a_1}{1}{1}{0} \cdots \mat{a_n}{1}{1}{0}\mat{\varepsilon}{1}{1}{0} \mat{-a_n}{1}{1}{0} \cdots \mat{-a_1}{1}{1}{0} \yaji{1}{0} \\
&=\pm \mat{m}{l}{k}{n} \mat{\varepsilon }{1}{1}{0} \left( \mat{-a_1}{1}{1}{0} \cdots \mat{-a_n}{1}{1}{0} \right)^t \yaji{1}{0} \\
&=\pm \mat{m}{l}{k}{n} \mat{\varepsilon }{1}{1}{0} \mat{m}{-k}{-l}{n} \yaji{(-1)^n}{0} \\
&=\pm \mat{\varepsilon m^2}{ - \varepsilon mk + (-1)^n}{\varepsilon mk + (-1)^n }{ - \varepsilon k^2} \yaji{(-1)^n}{0} \\
 =&\pm \yaji{\varepsilon (-1)^{n} m^2}{\varepsilon (-1)^{n} mk+1}.
\end{align*} 
\end{proof}

Then, by the next claim, together with $\varepsilon = \pm 1$ and $p>0$, we have $\frac{p}{q} = [a_{1},\ldots ,a_{n},\varepsilon ,-a_{n},\ldots ,-a_{1}]$.

\begin{claim}
Let $m, k$ be an arbitrary pair of integers. 
Then $(m^2, mk \pm 1)=1$. 
\end{claim}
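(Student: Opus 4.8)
The plan is to show that $(m^2, mk\pm 1)=1$ by proving that any common divisor of $m^2$ and $mk\pm 1$ must divide $1$. The cleanest route is to produce an explicit integer linear combination of $m^2$ and $mk\pm 1$ that equals $\pm 1$, since the greatest common divisor of two integers is the smallest positive value attained by such combinations. First I would observe that $k\cdot(mk\pm 1) = m k^2 \pm k$, and that $k^2\cdot m^2 = m\cdot(mk^2)$, so $m$ divides $mk^2 = k(mk\pm 1)\mp k$. The key algebraic identity to extract is
\[
k\,(mk\pm 1) \mp k^2\cdot m \;=\; \pm k,
\]
which already shows that any common divisor $\alpha$ of $m^2$ and $mk\pm 1$ divides $k$ only after dividing through by $m$; so instead I would aim directly at an identity producing $1$.

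A more robust approach, and the one I expect to carry through, is the congruence argument: let $\alpha=(m^2, mk\pm 1)$, so $\alpha\mid m^2$ and $\alpha\mid mk\pm 1$. The central step is to form $(mk)^2=(mk\pm 1\mp 1)^2$ and expand it. On one hand $(mk)^2=m^2k^2$ is divisible by $\alpha$ since $\alpha\mid m^2$. On the other hand, writing $mk=(mk\pm 1)\mp 1$ and squaring gives
\[
(mk)^2=(mk\pm 1)^2 \mp 2(mk\pm 1)+1,
\]
and every term on the right except the final $+1$ is divisible by $\alpha$ because $\alpha\mid mk\pm 1$. Reducing modulo $\alpha$ therefore yields $0\equiv 1 \pmod{\alpha}$, forcing $\alpha=1$. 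This is precisely the computation already recorded in Eq.~\eqref{eq10} of the main text, so the referee's streamlined version is essentially this same identity presented without the intermediate substitution $mk=\alpha Y\mp 1$.

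The only genuine obstacle, and it is a mild one, is bookkeeping the two sign cases $mk+1$ and $mk-1$ simultaneously; the expansion of $(mk\pm 1)^2\mp 2(mk\pm 1)+1$ must be checked to reproduce $(mk)^2$ in both cases, which it does since $(x\mp 1)^2=x^2\mp 2x+1$ with $x=mk\pm 1$ gives $(mk)^2$ exactly. I expect no difficulty beyond verifying that the cross term $\mp 2(mk\pm 1)$ indeed lies in the ideal generated by $\alpha$, which is immediate. Hence the argument reduces to a single squaring identity and a congruence, with no appeal to the structure of the continued fraction expansion or to Lemma~\ref{clm1}, making the statement self-contained and independent of the earlier inductive Claim~\ref{clm2}.
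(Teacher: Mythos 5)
Your proposal is correct and is essentially the paper's own argument for Claim~\ref{clm3}: the identity $(mk)^2=(mk\pm 1)^2\mp 2(mk\pm 1)+1$ reduced modulo $\alpha$ is exactly Eq.~\eqref{eq10}, with the substitution $m^2=\alpha X$, $mk\pm 1=\alpha Y$ replaced by direct congruence language. Note that the paper's appendix also records a genuinely shorter route due to the referee --- any prime dividing $(m^2,mk\pm 1)$ divides $m^2$, hence $m$, hence $1$ --- which avoids the squaring identity altogether, but your version stands on its own.
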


\begin{proof}
Let $\alpha \in \mathbb{Z}_+$ be the greatest common divisor of $m^2$ and $mk \pm 1$. 
If a prime $p$ divides $\alpha$, then it divides $m^2$, therefore it divides $m$. 
Because it also divides $mk+1$ or $mk-1$, it divides 1, which is absurd. 
We then conclude that $(m^2 , mk \pm 1)=1$. 
\end{proof}

\bibliographystyle{amsalpha}
\bibliography{2brRibbon}

\end{document}